\newcommand{\updot}[1]{\raisebox{0.9pt}{$\stackrel{\bullet}{#1}$}} 
\theoremstyle{plain}
\newtheorem{theorem}{Theorem}[section]
\newtheorem{proposition}[theorem]{Proposition}
\theoremstyle{definition}
\theoremstyle{remark}
\newtheorem*{remark}{Remark}
\begin{document}

\title{Managing Information in Queues: The Impact of Giving Delayed Information to Customers}
\author{ 
  Jamol Pender \\ School of Operations Research and Information Engineering \\ Cornell University
\\ 228 Rhodes Hall, Ithaca, NY 14853 \\  jjp274@cornell.edu  \\ 
 \and  
Richard H. Rand \\ Sibley School of Mechanical and Aerospace Engineering \\ Department of Mathematics \\ Cornell University
\\ 535 Malott Hall, Ithaca, NY 14853 \\  rand@math.cornell.edu  \\ 
 \and  
Elizabeth Wesson \\ Department of Mathematics \\ Cornell University
\\ 582 Malott Hall, Ithaca, NY 14853 \\  enw27@cornell.edu  \\ 
 }

\maketitle
\begin{abstract}
Delay or queue length information has the potential to influence the decision of a customer to use a service system.  Thus, it is imperative for service system managers to understand how the information that they provide will affect the performance of the system.  To this end, we construct and analyze two two-dimensional deterministic fluid models that incorporate customer choice behavior based on delayed queue length information.  In the first fluid model, customers join each queue according to a Multinomial Logit Model, however, the queue length information the customer receives is delayed by a constant $\Delta$.  We show that the delay can cause oscillations or asynchronous behavior in the model based on the value of $\Delta$.  In the second model, customers receive information about the queue length through a moving average of the queue length.  Although it has been shown empirically that giving patients moving average information causes oscillations and asynchronous behavior to occur in U.S. hospitals in the work of \citet{dong2015impact}, we analytically and mathematically show for the first time that the moving average fluid model can exhibit oscillations and determine their dependence on the moving average window.  Thus, our analysis provides new insight on how managers of service systems information systems should report queue length information to customers and how delayed information can produce unwanted behavior.     

\end{abstract}


\section{Introduction} \label{sec_intro}

Understanding the impact of providing delay information to customers in service systems is a very important problem in the operations management literature.  Smartphones and internet technology have changed the possibilities for communication between service systems and their potential customers.  Currently, many companies and system managers choose to provide their customers with valuable information that has the potential to influence their choice of using the service.  One example of this communication is delay announcements, which have become important tools of for customers to know how long they will wait on average for someone to start serving them.  These announcements are not only important because they give the customer information about the quality of the service, but also they have the possibility of influencing the possibility that a customer will return to use the service again.  As a consequence, understanding the impact of providing delay or queue length  information to customers on customer choices and system operations, as well as the development of methods to support such announcements, has attracted the attention of the Operations Research and Management communities in the past few years. 

  \begin{figure}
	\centering
		\includegraphics[scale=.75]{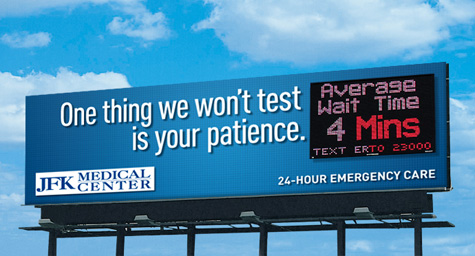}
\caption{Highway Signs Posting Emergency Room Wait Times.} \label{Fig1}
\end{figure}

One example of this new communication between customers and services is in the healthcare industry.  In Figure \ref{Fig1}, we show an example of a typical  billboard sign that many hospitals use for marketing as well as a way of providing information to potential patients.  Since emergency room waiting times can be very long, giving waiting time or queue length information to potential patients is a useful tool for hospitals to communicate to patients when their emergency rooms are relatively underloaded. Much of the current literature that explores the impact of giving customers information about queue lengths and waiting times has been applied in the context of telecommunication systems such as telephone call centers. However, understanding the impact in a healthcare context is much more complicated.  For one, in healthcare, the service discipline is not necessarily first come first serve and can be quite arbitrary.  Given the triage system that is prevalent in hospitals, one could be tempted to model the emergency room with a priority queue.   However, understanding the impact of waiting times and queue lengths on the dynamics in the priority setting is also quite difficult, see for example \citet{pendersampling}.  Moreover, unlike the call center literature where callers are likely to only speak with one agent, the patient experience often involves multiple servers that each have a different purpose in the service process of the patient.  Thus, the patient experience is more like movement through a queueing network.  For example, in a typical emergency room, a patient might interact with a nurse, a doctor, various administrative staff, and even laboratory technicians when tests need to be performed.

Most of the current research on providing queue length or waiting time information to customer focuses on the impact of delay announcements with respect to call centers and telecommunications applications.  There is a vast literature on this subject, which is mostly segmented into three different areas of research.  The first part emphasizes making accurate real-time delay announcements to customers.  In fact, in work by \citet{ibrahim2008real, ibrahim2009real, ibrahim2011real, ibrahim2011wait} develops new estimators for estimating delays in various queueing systems.  They primarily study two types of estimators.  The first estimator is the head of the line (HOL) estimator, which provides the current amount of time that the next customer to get service has waited in line.  The second type of estimator that they study is the delay of the last customer who entered the agent’s service (LES).   These two estimators are different and the papers \citet{whitt1999predicting, ibrahim2008real, ibrahim2009real, ibrahim2011real, ibrahim2011wait} provide a detailed analysis of these estimators.  The second part of the literature addresses how the delay announcements impact the dynamics of the queueing process and how customers respond to the announcements.  The work of \citet{armony2004customer, guo2007analysis, hassin2007information, armony2009impact, guo2009impacts, jouini2009queueing, jouini2011call, allon2011impact, allon2011we, ibrahim2015does, whitt1999improving} and references therein analyzes the impact of delay announcements on the queueing process and the abandonment process of the system.  Finally, the third part of the literature analyzes the customer psychology of waiting.  The work of \citet{hui1996tell, hul1997impact, pruyn1998effects, munichor2007numbers, sarel1998managing, taylor1994waiting} explores the behavioral aspect of customer waiting and how delays affect customer decisions.  This paper is most related to the second area of research; however, it is unique in that it includes customer choice with delay differential equations.  

More recently, there also is work that considers how information can impact queueing systems.  Work by \citet{jennings2015comparisons} compares ticket queues with standard queues.  In a ticket queue, the manager is unaware of when a customer abandons and is only notified of the abandonment when the customer would have entered service.  This artificially inflates the queue length process and the work of \citet{jennings2015comparisons} compares the difference in queue length between the standard and ticket queue.  Follow-up work by \citet{pender2015heavy, pender2015impact} also considers the case when there are dependencies between balking and reneging customers and when the server spends time clearing a customer who has abandoned the system respectively.  However, this work does not consider the aspect of choice and and delays in providing the information to customers, which is the case in many healthcare settings.   

Since hospital networks are more complicated than telephone call centers, understanding the waiting and queueing dynamics is a much harder problem, see for example \citet{armony2015patient}.  Even designing a delay estimator in hospital systems is very difficult and recent work by \citet{plambeck2014forecasting} develops a new emergency department delay estimator that combines methodology from statistical learning theory and queueing theory.  Because of this complexity, it is common that hospitals publish historic average waiting times using a 4-hour moving average and this has been noticed in the work of \citet{dong2015impact}.

\begin{figure}
	\centering
		\includegraphics[scale=.25]{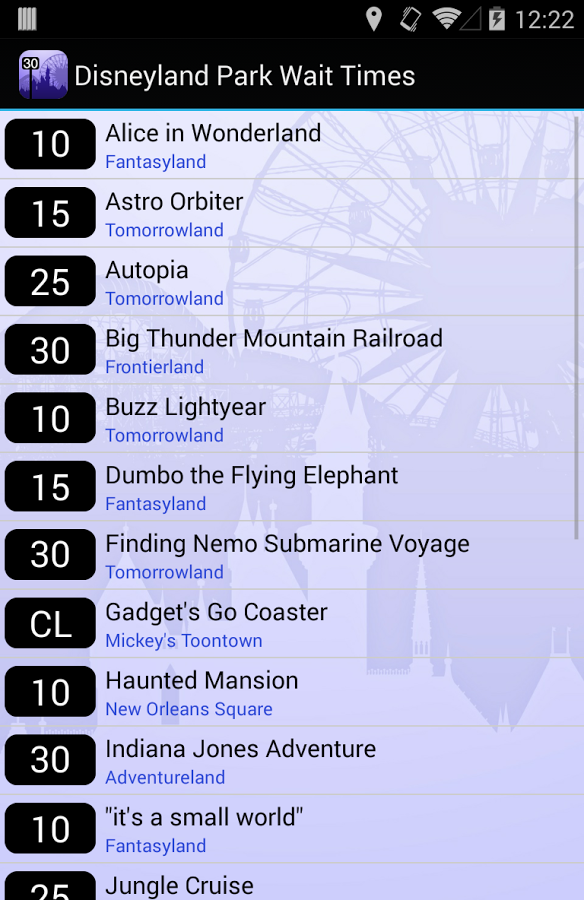}
\caption{Disneyland Park Wait Times App.} \label{Fig2}
\end{figure}

Another useful application of our work is for amusement parks like Disneyland or Six Flags.  In Figure \ref{Fig2}, we show a snapshot of the Disneyland app.  The Disneyland app lists waiting times of various rides in the themepark and customers get to choose which ride that they would want to go on given the waiting times.  However, the wait times on the app are not posted in real-time and are calculated based on moving average of the waiting times.   Thus, our queueing analysis is useful for Disney to syncronize their waiting times for rides across the themepark.  

This paper introduces two new fluid models, which describe the dynamics of customer choice and delay information that customers use to make decisions.  In the first fluid model, the customer receives information about the queue length which is delayed by a parameter $\Delta$.  In the second fluid model, we use a moving average of the queue length over the time interval $\Delta$ to represent the queue length information given to the customer.  The models that we present are useful in two major contexts.  The first context is where the software that communicates with customers is delayed in some fashion, which is common in many hospitals who outsource the computation of their waiting times and queue lengths.  The second context is where the customer reaction to the information is delayed.  This can happen in the Disney example where there is a delay between customers viewing the wait times and them joining the queue.  Thus, the delay does not necessarily need to be a function of the software or a lag in information, it can be caused by the customer behavior and distance from the queue that they are joining.  With these fluid models are able to show that when the delay is small the two queues are balanced and synchronized; however, when the delay is large enough, the  two queue are not balanced and asynchronous. We determine the exact threshold where the the dynamics of the two queues are different for both the constant and moving average models.  Our analysis combines theory from delay differential equations, customer choice models, and stability analysis of differential equations.

 \subsection{Main Contributions of Paper}

The contributions of this work can be summarized as follows:    
\begin{itemize}
\item We develop two new two-dimensional fluid models that incorporate customer choice based on delayed queue length information.  One model uses a constant delay and one model uses a moving average.  
\item We show that the constant delay queueing model can experience oscillations where the two queues are not synchronized and derive the exact threshold where the oscillatory behavior is triggered in terms of the model parameters.  Moreover, we show that the threshold is monotone in terms of the arrival rate.  
\item We show that the moving average queueing model can experience oscillations where the two queues are not synchronized.  However, unlike the constant delay system, the threshold is not monotone as a function of the arrival rate.
\end{itemize} 


\subsection{Organization of Paper}

The remainder of this paper is organized as follows. Section~\ref{sec_CD} describes a constant delay fluid model.  We derive the critical delay threshold under which the queues are balanced if the delay is below the threshold and the queues are asynchronized if the delay is above the threshold.  We also show that the instability is preserved as long as the delay is increased.  Section~\ref{sec_MA} describes a constant moving average delay fluid model.  We derive the critical delay threshold under which the queues are balanced if the delay is below the threshold and the queues are asynchronized if the delay is above the threshold.  We also show that the instability is preserved as long as the delay is increased in certain regions of the parameter space. Finally in Section~\ref{sec_conclusion}, we conclude with directions for future research related to this work.


\section{Constant Delay Fluid Model }\label{sec_CD}

In this section, we present a new fluid model with customer choice based on the queue length with a constant delay.      
Thus, we begin with two infinite-server queues operating in parallel, where customers choose which queue to join by taking the size of the queue length into account. However, we add the twist that the queue length information that is reported to the customer is delayed by a constant $\Delta$.  Therefore, the queue length that the customer receives is actually the queue length $\Delta$ time units in the past.  An example of this delay is given in Figure \ref{Fig:Jfk}, which is JFK Medical Center in Boynton Beach, Florida.  In Figure \ref{Fig:Jfk}, the average wait time is reported to be 12 minutes.  However, in the top right of the figure we see that the time of the snapshot was 4:04pm while the time of a 12 minute wait is as of 3:44pm.  Thus, there is a delay of 20 minutes in the reporting of the wait times in the emergency room and this can have an important impact on the system dynamics as we will show in the sequel.  

The choice model that we use to model these dynamics is identical to that of a Multinomial Logit Model (MNL) where the utility for being served in the $i^{th}$ queue with delayed queue length  $Q_i(t-\Delta)$ is $u_i(Q_i(t-\Delta)) =  − Q_i(t-\Delta)$. Thus, in a stochastic context with two queues, the probability of going to the first queue is given by the following expression

\begin{eqnarray}
p_1( Q_1(t), Q_2(t), \Delta ) &=& \frac{\exp(-Q_1(t-\Delta))}{\exp(-Q_1(t-\Delta)) + \exp(-Q_2(t-\Delta))} 
\end{eqnarray}
and the probability of going to the second queue is 
\begin{eqnarray}
p_2( Q_1(t), Q_2(t), \Delta ) &=& \frac{\exp(-Q_2(t-\Delta))}{\exp(-Q_1(t-\Delta)) + \exp(-Q_2(t-\Delta))} .
\end{eqnarray}

\begin{figure}
	\centering
		\includegraphics[scale=.25]{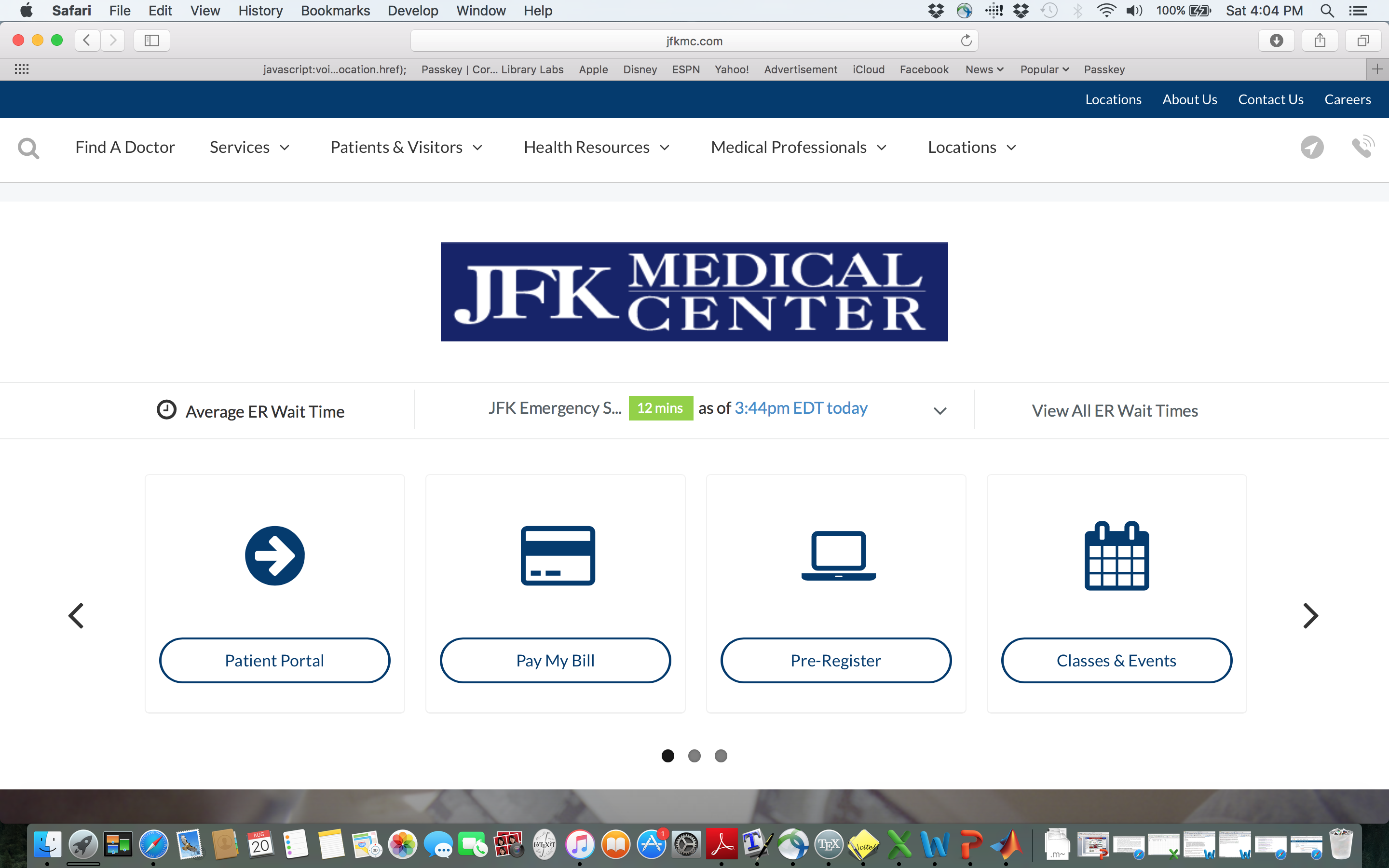}
\caption{JFK Medical Center Online Reporting.} \label{Fig:Jfk}
\end{figure}

Since the main goal of our analysis is to provide insight into the dynamics of the system when delayed information is given to customers, we analyze a fluid model of the system instead of the actual stochastic process, which is more difficult.  Moreover, the fluid model enables us to understand the mean dynamics of the system when the number of arrivals in the system is large, which is the case in themeparks like Disneyland.  However, since we analyze the fluid model instead of the real stochastic system, we no longer have probabilities in our choice model.  Instead, we now have rates at which customers join each of the two queues.  In our fluid model, we assume that the sum of the arrival rate of customers to both queues is equal to the constant rate $\lambda$.   Thus, customers join the first queue, $q_1(t)$, at rate 

\begin{equation}
\lambda \cdot \frac{\exp(-q_1(t-\Delta))}{\exp(-q_1(t-\Delta)) + \exp(-q_2(t-\Delta))}
\end{equation}
and therefore patients join the second queue, $q_2(t)$, at rate
\begin{equation}
\lambda \cdot \frac{\exp(-q_2(t-\Delta))}{\exp(-q_1(t-\Delta)) + \exp(-q_2(t-\Delta))} .
\end{equation} 
Thus, our model for customer choice infinite server queues with delayed information can be represented by the two dimensional system of delay differential equations
\begin{eqnarray}
\updot{q}_1(t) &=& \lambda \cdot \frac{\exp(-q_1(t-\Delta))}{\exp(-q_1(t-\Delta)) + \exp(-q_2(t-\Delta))} - \mu q_1(t) \label{ddecd1} \\
\updot{q}_2(t) &=& \lambda \cdot \frac{\exp(-q_2(t-\Delta))}{\exp(-q_1(t-\Delta)) + \exp(-q_2(t-\Delta))} - \mu q_2(t) \label{ddecd2}
\end{eqnarray}
where we assume that $q_1(t)$ and $q_2(t)$ start with different initial functions $\varphi_1(t)$ and $\varphi_2(t)$ on the interval $[-\Delta,0]$.  

\begin{remark}
When the two delay differential equations are started with the same initial functions, they are identical for all time because of the symmetry of the problem.  Therefore, we will start the system with non-identical initial conditions so the problem is no longer trivial and the dynamics are not identical.  
\end{remark}


\begin{figure}
\captionsetup{justification=centering}
		\hspace{-.35in}~\includegraphics[scale = .22]{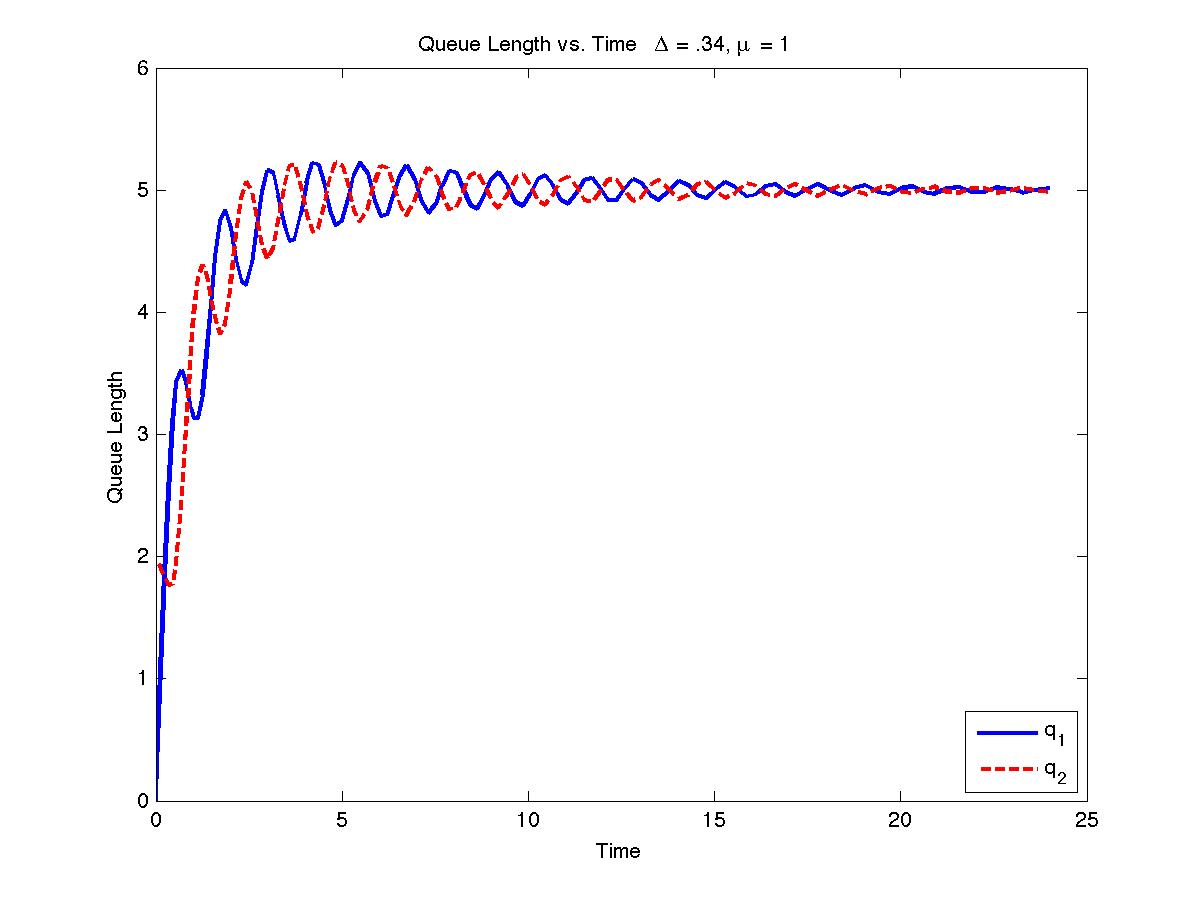}~\hspace{-.3in}~\includegraphics[scale = .22]{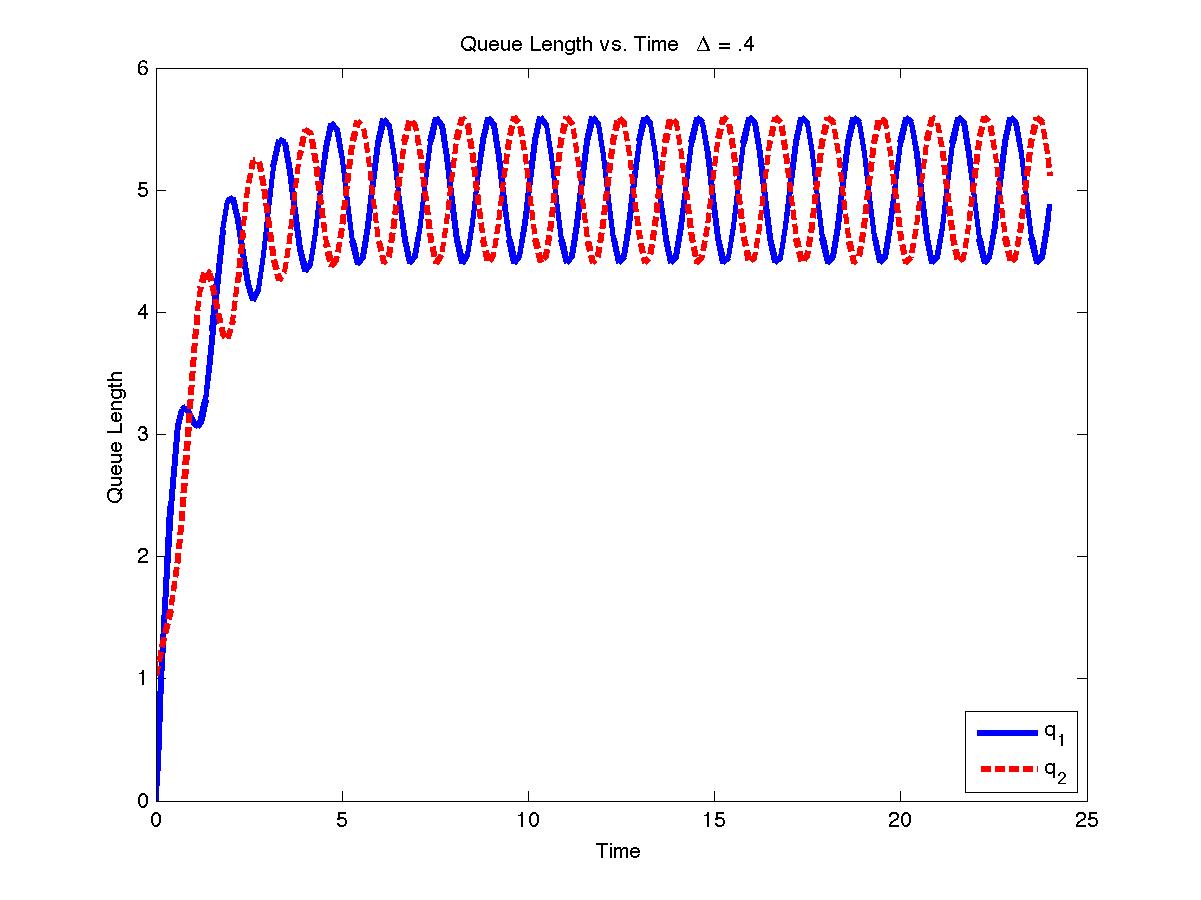}
\caption{Numerical integration of fluid model for $\lambda=10$, $\mu=1$, $\Delta_{cr} = .3614$ \\ $\Delta = .34$  (Left)\quad  $\Delta = .4$ (Right).} \label{Fig3}
\end{figure}

In Figure \ref{Fig3}, we see qualitatively different behavior of the fluid model equations when $\Delta = .34$ and $\Delta = .4$.  It is clear from the left plot in Figure \ref{Fig3} that the two queues are synchronized and converge to the $\Delta = 0$ equilibrium solution.  However, on the right plot of Figure \ref{Fig3}, we see that the two queues are not synchronized and exhibit oscillatory and asynchronous behavior.    It turns out that for the model parameters presented in Figure \ref{Fig3}, asynchronous dynamics will not occur as in right plot of Figure \ref{Fig3} if the delay $\Delta < .03614$.  Otherwise, oscillations and asynchronous dynamics will exist for both queues.  However, this change in behavior can be explained by the fact that the equilibrium points of the queue length delay differential equations transition from stable to unstable where a limit cycle is born.  This situation is known as a Hopf bifurcation  and will be explained in more detail later in the paper.  In the next theorem, we show how to derive the critical delay that serves as the boundary of oscillatory and non-oscillatory dynamics of the system. 

\begin{theorem}
For the constant delay choice model, the critical delay parameter is given by the following expression
\begin{equation}
\Delta_{cr}(\lambda, \mu)=\frac{2 \arccos(-2\mu/\lambda)}{ \sqrt{\lambda^2-4\mu^2}}.
\end{equation}

\begin{proof}
We split the proof into several parts to help readers understand the important ingredients that are necessary to prove the theorem; we outline the main parts of the proof in bold text.   
 
\paragraph{Computing the Equilibrium}
 
The first part of the proof is to compute an equilibrium for the solution to the delay differential equations.  In our case, the delay differential equations given in Equations \ref{ddecd1} - \ref{ddecd2} are symmetric.  Moreover, in the case where there is no delay, the two equations converge to the same point since in equilibrium each queue will receive exactly one half of the arrivals and the two service rates are identical.  This is also true in the case where the arrival process contains delays in the queue length since in equilibrium, the delayed queue length is equal to the non-delayed queue length.  Thus, we have in equilibrium that 
\begin{equation}
q_1(t-\Delta) = q_2(t-\Delta) = q_1(t) = q_2(t) = \frac{\lambda}{2 \mu} \quad \mathrm{ as \ } t \to \infty.
\end{equation}

\paragraph{Understanding the stability of the equilibrium}  Now that we know the equilibrium for Equations \ref{ddecd1} - \ref{ddecd2}, we need to understand the stability of the delay differential equations around the equilibrium.  The first step in doing this is to set each of the queue lengths to the equilibrium points plus a perturbation.  Thus, we set each of the queue lengths to 
\begin{eqnarray}
q_1(t) &=& \frac{\lambda}{2 \mu} + u_1(t) \label{sub1}\\
q_2(t) &=& \frac{\lambda}{2 \mu} + u_2(t) \label{sub2}
\end{eqnarray}
where $u_1(t)$ and $u_2(t)$ are pertubations about the equilibrium point $ \frac{\lambda}{2 \mu}$.  By substituting Equations \ref{sub1} - \ref{sub2} into Equations \ref{ddecd1} - \ref{ddecd2} respectively and linearizing around the point $u_1(t) = u_2(t) = 0$, we have that the perturbations solve the following delay differential equations 

\begin{eqnarray}
\updot{u}_1(t) &=& -\frac{\lambda}{4} \cdot (u_1(t-\Delta)-u_2(t-\Delta)) - \mu \cdot  u_1(t) \label{pert1}\\
\updot{u}_2(t) &=& -\frac{\lambda}{4} \cdot (u_2(t-\Delta)-u_1(t-\Delta)) - \mu \cdot u_2(t) \label{pert2}.
\end{eqnarray}

\paragraph{Uncoupling the differential equations}  In their current form the delay differential equations for the perturbations do not yield any insight since they are coupled together.  However, we can make a simple transformation and the resulting delay differential equations will become uncoupled.  Thus, we apply the following transformation to uncouple the system of equations:
\begin{eqnarray}
v_1(t) &=& u_1(t) + u_2(t)   \label{subuncup1} \\
v_2(t) &=& u_1(t) - u_2(t)  \label{subuncup2} .
\end{eqnarray}
This transformation yields the following delay equations for the transformed perturbations $v_1(t)$ and $v_2(t)$
\begin{eqnarray}
\updot{v}_1(t) &=& -\mu \cdot  v_1(t) \label{uncup1} \\
\updot{v}_2(t) &=& -\frac{\lambda}{2} \cdot v_2(t-\Delta) - \mu \cdot v_2  \label{uncup2} .
\end{eqnarray}
Since Equation \ref{uncup1} is linear and does not depend on the delay parameter $\Delta$, we can explicitly solve for the solution.  The general solution of Equation \ref{uncup1}  is $v_1(t) = c_1 \exp({-\mu t})$ and is bounded and stable.  Thus, it remains for us to analyze Equation \ref{uncup2}.  To do this, we substitute the following expression for $v_2(t)$
\begin{equation}
v_2(t) = \exp(r t).
   \end{equation}    
This substitution now yields a transcendental equation for the parameter $r$ and is given by the equation
\begin{equation}\label{r-trans}
r = -\frac{\lambda}{2} \cdot \exp(-r \Delta) - \mu.
\end{equation}
Now it remains to find the transition between stable and unstable solutions.  When the parameter $r$ crosses the imaginary axis, the stability of the equilibrium changes. In the full nonlinear system given in Equations \ref{ddecd1} - \ref{ddecd2}, this transition generally occurs in a Hopf bifurcation, in which a pair of roots crosses the imaginary axis and a limit cycle is born. To find the critical value of $\Delta$ for the change of stability, we set $r=i\omega$, which yields the following equation
\begin{equation} \label{iw}
i\omega = -\frac{\lambda}{2}(\cos\omega \Delta- i \sin\omega \Delta)-\mu.
\end{equation}
Writing the real and imaginary parts of  Equation \ref{iw}, we have that:
\begin{equation}\label{realeqn}
0 = -\frac{\lambda}{2}\cos\omega \Delta-\mu
\end{equation}
for the real part and 
\begin{equation}\label{imageqn}
\omega = \frac{\lambda}{2}\sin\omega \Delta
\end{equation}
for the imaginary part.  
Solving Equations \ref{realeqn} - \ref{imageqn} for the functions $\sin\omega \Delta$ and $\cos\omega \Delta$ we get that
\begin{equation}
\cos\omega \Delta = -\frac{ 2\cdot \mu}{\lambda}
\end{equation}
for the real part and 
\begin{equation}
\sin\omega \Delta = \frac{ 2\cdot \omega}{\lambda}
\end{equation}
for the imaginary part.  Now by squaring both equations and adding them together we get that 
\begin{equation}
\lambda^2 =  4 \cdot (\omega^2 + \mu^2),
\end{equation}
which by some rearranging yields
\begin{equation}\label{omeg}
\omega = \frac{1}{2}\sqrt{\lambda^2-4\mu^2}	.
\end{equation}
Now if we go back to use Equation \ref{realeqn} to find an expression for the critical delay $\Delta_{cr}$.  From Equation \ref{realeqn} we know that 
\begin{equation}
\cos\omega \Delta = -\frac{ 2\cdot \mu}{\lambda}
\end{equation} 
and therefore by taking the arcosine of both sides, we have that
\begin{eqnarray}
 \Delta = \frac{\arccos(-2\mu/\lambda)}{\omega}
\end{eqnarray}
 Finally substituting our expression for $\omega$ in Equation \ref{omeg}, we are able to obtain the final expression for the critical delay:
 \begin{equation}\label{finalcd}
 \fbox{ $\displaystyle{  \Delta_{cr} = \frac{2 \arccos(-2\mu/\lambda)}{ \sqrt{\lambda^2-4\mu^2}}   }  $}
\end{equation}
\end{proof}
\end{theorem}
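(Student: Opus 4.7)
The plan is to follow the standard Hopf bifurcation analysis for delay differential equations: locate the symmetric equilibrium, linearize the system around it, decouple the linearization, and then find the smallest delay at which a pair of characteristic roots crosses the imaginary axis.

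First I would exploit the symmetry of the MNL choice functions. Because both queues have the same arrival and service parameters, any equilibrium must satisfy $q_1^\ast = q_2^\ast$, which forces each queue to receive half of the arrivals and gives $q_1^\ast = q_2^\ast = \lambda/(2\mu)$. I would then write $q_i(t) = \lambda/(2\mu) + u_i(t)$ and expand the choice probabilities to first order in $u_1,u_2$. The key observation is that at the symmetric point the derivatives of $p_1$ with respect to its two arguments are $-1/4$ and $+1/4$, so the linearized system becomes the coupled pair $\dot u_1 = -(\lambda/4)(u_1(t-\Delta)-u_2(t-\Delta)) - \mu u_1$ and its mirror image for $u_2$.

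Next I would decouple by introducing the sum and difference coordinates $v_1 = u_1+u_2$ and $v_2 = u_1-u_2$. The sum variable satisfies the delay-free equation $\dot v_1 = -\mu v_1$, which is exponentially stable and irrelevant to the bifurcation; all instability must come from the difference mode, which obeys the scalar DDE $\dot v_2 = -\mu v_2 - (\lambda/2) v_2(t-\Delta)$. Substituting the ansatz $v_2(t) = e^{rt}$ produces the transcendental characteristic equation $r + \mu + (\lambda/2)e^{-r\Delta} = 0$, and the problem reduces to locating the smallest $\Delta>0$ at which a root crosses the imaginary axis.

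To find that crossing, I would set $r = i\omega$ and separate real and imaginary parts, obtaining $\cos(\omega\Delta) = -2\mu/\lambda$ and $\sin(\omega\Delta) = 2\omega/\lambda$. Using $\sin^2+\cos^2=1$ immediately gives $\omega = \tfrac12\sqrt{\lambda^2-4\mu^2}$ (which requires $\lambda>2\mu$ for a real crossing to exist), and then the real-part equation inverts to $\Delta = \arccos(-2\mu/\lambda)/\omega$, yielding the claimed formula after substitution. I expect the main obstacle to be justifying each qualitative step cleanly rather than any hard calculation: namely, verifying that the linearization really has the coefficients $\pm\lambda/4$, checking that the sum/difference change of variables truly decouples the full linear DDE (not just its undelayed part), and noting the parameter range $\lambda>2\mu$ in which the crossing exists; the transversality condition needed to call the crossing a genuine Hopf bifurcation is standard and can be verified by implicit differentiation of the characteristic equation with respect to $\Delta$.
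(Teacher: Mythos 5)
Your proposal matches the paper's proof step for step: the same symmetric equilibrium $\lambda/(2\mu)$, the same $\pm\lambda/4$ linearization, the same sum/difference decoupling, the same characteristic equation $r+\mu+(\lambda/2)e^{-r\Delta}=0$, and the same $r=i\omega$ analysis leading to the formula. Your added remarks about the $\lambda>2\mu$ condition and the transversality check are sound (the paper handles the latter separately in its Proposition on root crossings), but the route is essentially identical.
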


\begin{figure}
\captionsetup{justification=centering}
		\centering \includegraphics[scale=.22]{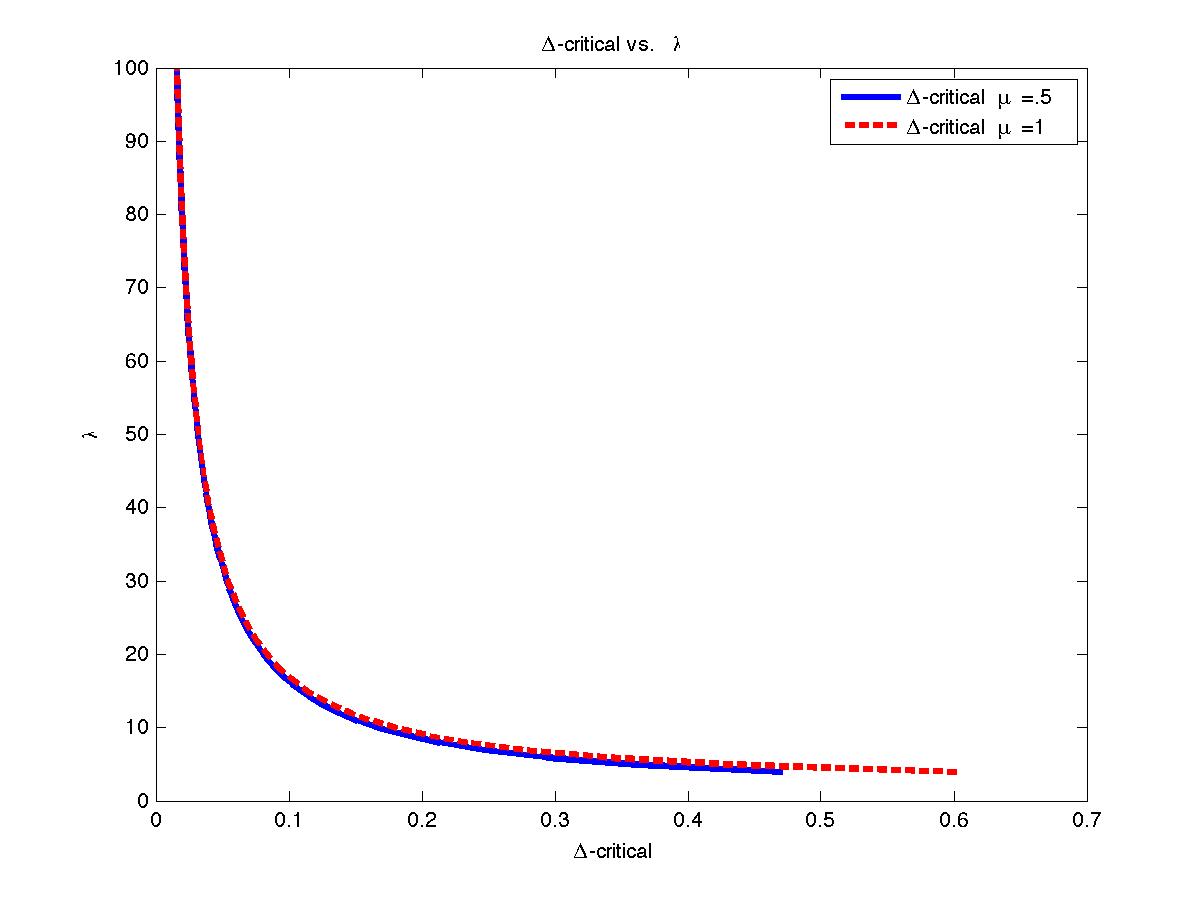}~\hspace{-.3in}~\includegraphics[scale=.22]{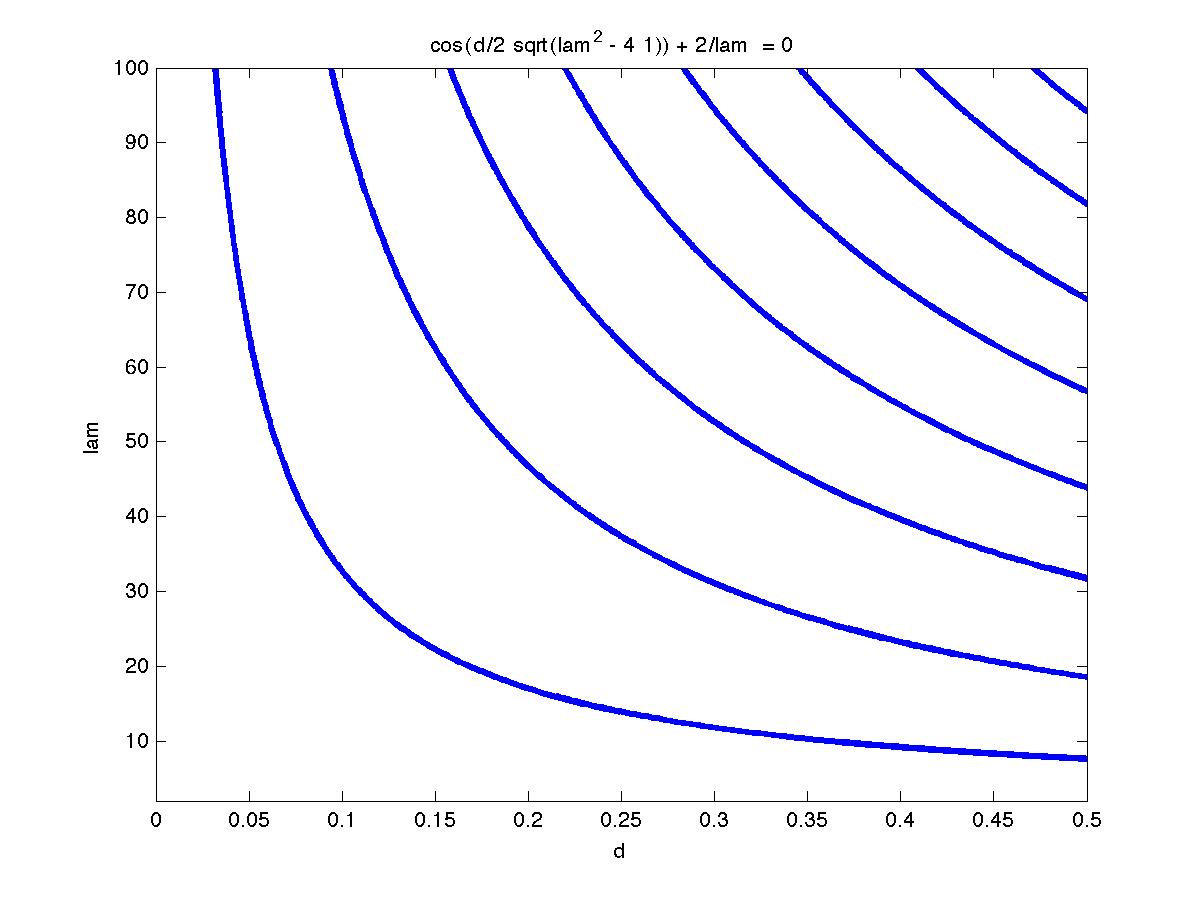}
\caption{$\Delta_{cr}$ as a function of $\lambda$ when $\mu=.5$ and  $\mu = 1$ (Left) \\ EZ-Plot of $\Delta_{cr}$ as a function of $\lambda$ when $\mu = 1$ (Right).} \label{Fig4}
\end{figure}

\begin{figure}
		\hspace{-.35in}~\includegraphics[scale=.22]{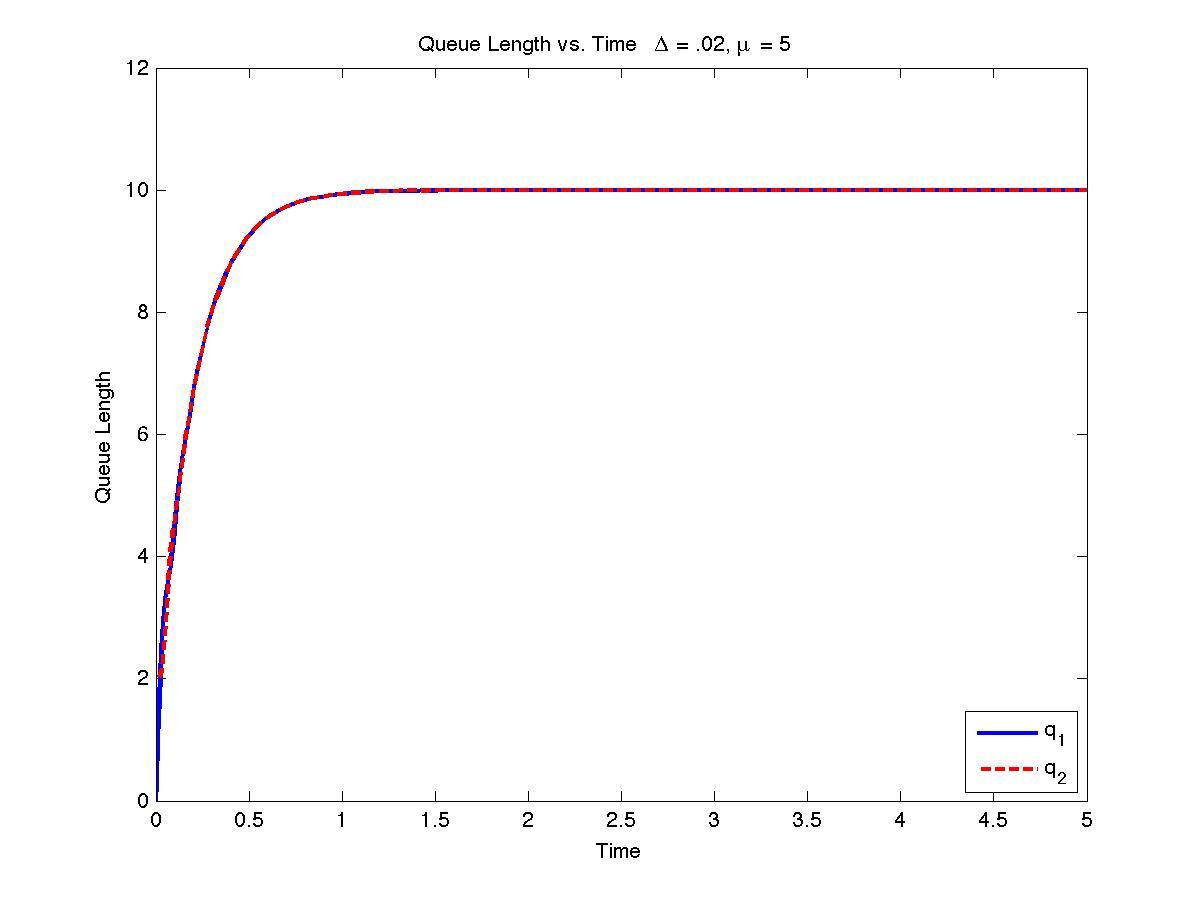}~\hspace{-.3in}~\includegraphics[scale=.22]{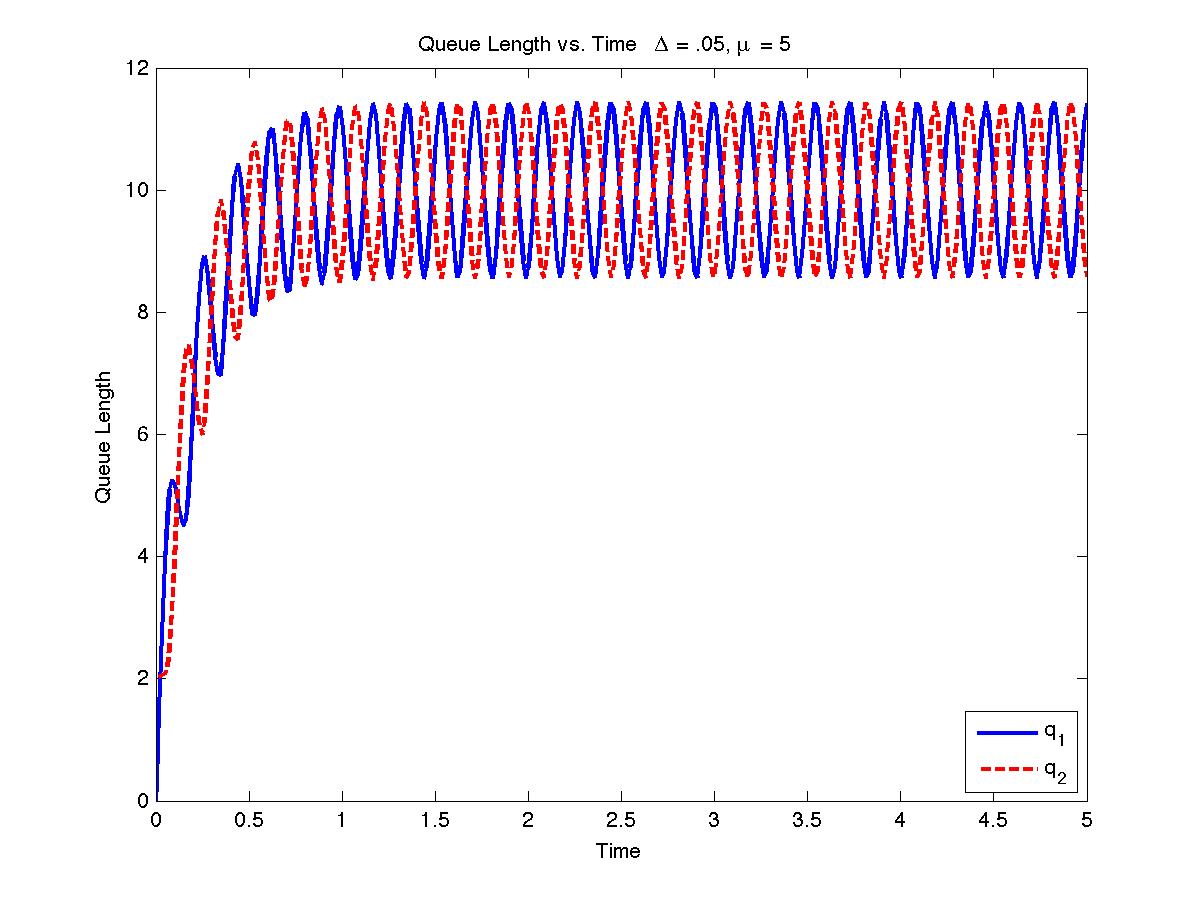}
\caption{Numerical integration of fluid model for $\lambda=100$, $\mu=5$, $\Delta = .02$ (left) $\Delta = .05$ (right).} \label{Fig5}
\end{figure}

In Figure \ref{Fig4}, we show the dependence of the critical delay $\Delta_{cr}$ as a function of the model parameter $\lambda$ while keeping $\mu$ constant.  On the left of Figure \ref{Fig4}, we plot Hopf curves when $\mu=.5$ and $\mu=1$.  We see that the curves are very similar for both values of $\mu$ and that the critical delay value decreases as $\lambda$ increases.  Moreover, on the right of Figure \ref{Fig4}, we use the function in Matlab called \textbf{EZ-Plot}, to plot the various critical delay values.  We see that there are several Hopf bifurcation curves, which could indicate that there could possibly be several regions where the stability of the delay differential equation system might change i.e the example of Figure \ref{Fig3} could be reversed.  However, we will show in the sequel that this reversal of stability is impossible since the all of the roots pass from the left half-plane to right half-plane, which prevents the system from becoming stable again.


\subsection{Hopf Bifurcation Curves in the Constant Delay Model}

On the right of Figure \ref{Fig4}, as the first Hopf curve is crossed, we see a stable limit cycle born. However, after the next Hopf curve is crossed (as parameters are slowly changed), there are various possibilities:

\begin{enumerate}
\item The pair of roots which crossed into the right half-plane in the first Hopf, may cross back into the left half-plane. Thus all roots are in the left half-plane again and the equilibrium reverts to stability. The limit cycle which was born in the first Hopf bifurcation shrinks to nothing and disappears.
\item Another pair of roots may cross into the right half-plane, so that two pairs of roots are now in the left half-plane. A new limit cycle may be born, but the stability of the equilibrium does not change. The new limit cycle is expected to be unstable.
\item One of many possible degenerate cases: multiple pairs may cross at once, or the first pair of roots may re-cross simultaneously with the next pair crossing, or a pair of roots may touch the imaginary axis but not cross. 
\end{enumerate}

Numerical integration with the Matlab delay differential equation package \textbf{dde23} showed that there was only one stable limit cycle observed, no matter how many of the Hopf curves are crossed. However, this does not tell us what happens to the various pairs of imaginary roots which occur on the various Hopf curves. Do they all pass from left half-plane to right half-plane, or do some of them come back in the opposite direction?  The purpose of the remainder of this subsection is to address this issue.  

Now suppose that the delay $\Delta$ is close to a critical value for a Hopf bifurcation.  We then make a slight perturbation from the critical value for the Hopf bifurcation i.e
\begin{equation}\label{Dperturb}
\Delta = \Delta_0 + \epsilon \Delta_1
\end{equation}
where $\epsilon \ll 1$.  Then the root $r$ will be slightly perturbed from the pure imaginary value it would take at $\Delta = \Delta_0$.  
That is, we can write
\begin{equation}\label{rperturb}
r = i \omega + \epsilon ( i r_1 + r_2)
\end{equation}
where $r_1$ and $r_2$, the imaginary and real parts of the perturbation, may be determined in terms of $\Delta_0$ and $\Delta_1$.

\begin{proposition}\label{prop1}
Suppose that we make a slight perturbation on the order of $\epsilon \Delta_1$ near a critical delay value for a Hopf bifurcation, then the real part of r is equal to 
\begin{equation}\label{rcd2}
r_2 = \frac{4 \omega^2 \Delta_1}{8 \Delta_0 \mu + \Delta_0^2 \lambda^2 + 4}.
\end{equation}
In particular, $r_2$ has the same sign as $\Delta_1$.  
\end{proposition}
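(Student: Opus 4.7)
The plan is to perturb the transcendental characteristic equation \eqref{r-trans} around a Hopf root. Starting from
\begin{equation*}
r + \mu + \frac{\lambda}{2} e^{-r \Delta} = 0,
\end{equation*}
I will substitute $\Delta = \Delta_0 + \epsilon \Delta_1$ and $r = i\omega + \epsilon(i r_1 + r_2)$ as in \eqref{Dperturb} and \eqref{rperturb}, and expand the exponential to first order in $\epsilon$. The zeroth-order relation $i\omega + \mu + \tfrac{\lambda}{2} e^{-i\omega \Delta_0} = 0$ reproduces the Hopf condition; crucially, it allows the replacement $\tfrac{\lambda}{2} e^{-i\omega \Delta_0} = -(i\omega + \mu)$ inside the $O(\epsilon)$ terms, which eliminates the transcendental piece and leaves a linear complex equation in $r_1$ and $r_2$.

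Next I will split that linear equation into its real and imaginary parts, obtaining the $2\times 2$ real system
\begin{align*}
(1 + \mu \Delta_0)\, r_2 - \omega \Delta_0\, r_1 &= \omega^2 \Delta_1, \\
\omega \Delta_0\, r_2 + (1 + \mu \Delta_0)\, r_1 &= -\mu \omega \Delta_1.
\end{align*}
Solving this system (for instance by Cramer's rule) isolates
\begin{equation*}
r_2 = \frac{\omega^2 \Delta_1}{(1 + \mu \Delta_0)^2 + \omega^2 \Delta_0^2}.
\end{equation*}

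Finally I will simplify the denominator using \eqref{omeg}, which gives $4\omega^2 = \lambda^2 - 4\mu^2$. Multiplying numerator and denominator by $4$ converts $4\mu^2 \Delta_0^2 + 4\omega^2 \Delta_0^2$ into $\lambda^2 \Delta_0^2$ and yields the claimed formula \eqref{rcd2}. Since the denominator $8 \Delta_0 \mu + \Delta_0^2 \lambda^2 + 4$ is manifestly positive for all $\Delta_0, \lambda, \mu > 0$, the sign of $r_2$ matches that of $\Delta_1$, which is the key consequence needed to rule out a reversal of stability as successive Hopf curves are crossed.

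The main obstacle I anticipate is purely bookkeeping: making sure the $O(\epsilon)$ term from the Taylor expansion of $e^{-r\Delta}$ is collected correctly before substituting the zeroth-order identity, and then performing the algebraic simplification of the denominator using the expression for $\omega^2$ to match the stated form. Once that substitution is handled cleanly, the positivity of the denominator gives the sign claim with no additional work.
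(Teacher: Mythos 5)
Your proposal is correct and takes essentially the same route as the paper: perturb the characteristic equation $r = -\tfrac{\lambda}{2}e^{-r\Delta} - \mu$ with the ansatz $\Delta = \Delta_0 + \epsilon\Delta_1$, $r = i\omega + \epsilon(ir_1 + r_2)$, solve the resulting $O(\epsilon)$ linear system for $r_2$, and simplify the denominator using the Hopf conditions; your intermediate value $r_2 = \omega^2\Delta_1/\bigl((1+\mu\Delta_0)^2 + \omega^2\Delta_0^2\bigr)$ agrees with the paper's expression after substituting $\cos\omega\Delta_0 = -2\mu/\lambda$ and $\sin\omega\Delta_0 = 2\omega/\lambda$. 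The only (harmless) difference is that you insert the zeroth-order identity $\tfrac{\lambda}{2}e^{-i\omega\Delta_0} = -(i\omega+\mu)$ before solving the linear system, whereas the paper carries $\sin\omega\Delta_0$ and $\cos\omega\Delta_0$ symbolically and substitutes them at the end.
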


\begin{proof}
When $\epsilon = 0$, we reduce back to the original critical threshold of Equation \ref{finalcd}.  Now we substitute Equations \ref{Dperturb} and \ref{rperturb} into Equation 
\ref{r-trans} and do a Taylor expansion for small values of $\epsilon$. Collecting real and imaginary parts, Equation \ref{r-trans} becomes
\begin{align}
 0 = & \, i \left(\omega -\frac{1}{2} \lambda  \sin \left(\Delta _0 \omega \right)\right)+\frac{1}{2} \lambda  \cos \left(\Delta
   _0 \omega \right)+\mu \nonumber \\
   & + \epsilon \left[ i \left( r_1-\frac{1}{2} \lambda  \left(\Delta _0 r_2 \sin \left(\Delta _0 \omega \right)+\cos \left(\Delta _0 \omega \right)
   \left(\Delta _0 r_1-\Delta _1 \omega \right)\right) \right) \right. \nonumber \\
   & \left. + r_2-\frac{1}{2} \lambda  \left(\sin \left(\Delta _0 \omega \right) \left(\Delta _1 \omega +\Delta _0 r_1\right)+\Delta
   _0 r_2 \cos \left(\Delta _0 \omega \right)\right) \right] \nonumber \\
   & + O(\epsilon^2)
\end{align}

We set the real and imaginary parts of the $O(\epsilon)$ term separately equal to 0, and solve for $r_1$ and $r_2$ since there are two linear equations and two unknowns.  Solving for the real part of $r$, we find that $r_2$ is equal to the following value  
\begin{equation}\label{rcd}
r_2 = \frac{- 2 \Delta_1 \lambda \omega \sin \omega \Delta_0}{4 \Delta_0 \lambda \cos \omega \Delta_0 - \Delta_0^2 \lambda^2 - 4}.
\end{equation}

Now if we substitute the expressions for $\sin \omega \Delta_0$ and $\cos \omega \Delta_0$ from Equations \ref{realeqn} - \ref{imageqn}, then we have that 

\begin{equation}\label{rcd2}
r_2 = \frac{4 \omega^2 \Delta_1}{8 \Delta_0 \mu + \Delta_0^2 \lambda^2 + 4}.
\end{equation}
\end{proof}

Thus $r_2$ has the same sign as $\Delta_1$. This means that as $\Delta$ increases past $\Delta_0$, the critical Hopf value, $r$ crosses the imaginary axis from left to right. This analysis holds for every Hopf curve, which implies that the initial instability that occurs after passing through the first Hopf curve is preserved and does not change as we pass through more Hopf curves.  In Figure \ref{Fig5}, we provide an additional numerical example to illustrate the change in stability before and after our critical delay $\Delta_{cr}$.  Once again, we see that for all values of the delay before the critical delay threshold, the two queues synchronize, balance is achieved, and the system is stable near the equilibrium point.  However, for all values after the critical delay threshold, the two queues exhibit asynchronous behavior and are not stable near the equilibrium point.  However, not all real systems use a constant delay to report to their customers about the queue length or waiting time.  It has been observed in \citet{dong2015impact} that some service systems such as hospitals use a moving average.  Thus, a moving average fluid model will be analyzed in the subsequent section.


\section{Moving Average Delay Fluid Model} \label{sec_MA}

In this section, we present another fluid model with customer choice and where the delay information presented to the customer is a moving average. This model assumes that customers are informed about the queue length, but in the form of a moving average of the queue length between the current time and $\Delta$ time units in the past.  Like in the previous model, customers also have the choice to join two parallel infinite server queues and they join according to the same multinomial logit model.   We also assume that the total rate at which customers show up to the system is given by the parameter $\lambda$ which is a constant.  However, unlike the previous model, we assume that customers join the first queue at rate 
\begin{equation}
\lambda \cdot \frac{\exp\left(- \frac{1}{\Delta} \int^{t}_{t-\Delta} q_1(s) ds \right)}{\exp\left(- \frac{1}{\Delta} \int^{t}_{t-\Delta} q_1(s) ds \right) + \exp\left(- \frac{1}{\Delta} \int^{t}_{t-\Delta} q_2(s) ds \right)}
\end{equation}
and join the second queue at rate
\begin{equation}
\lambda \cdot \frac{\exp\left(- \frac{1}{\Delta} \int^{t}_{t-\Delta} q_2(s) ds \right)}{\exp\left(- \frac{1}{\Delta} \int^{t}_{t-\Delta} q_1(s) ds \right) + \exp\left( - \frac{1}{\Delta} \int^{t}_{t-\Delta} q_2(s) ds \right)} .
\end{equation} 
Thus, our model for customer choice with delayed information in the form of a moving average can be represented by a two dimensional system of functional differential equations
\begin{eqnarray}
\updot{q}_1(t) &=& \lambda \cdot \frac{\exp\left(- \frac{1}{\Delta} \int^{t}_{t-\Delta} q_1(s) ds \right)}{\exp\left(- \frac{1}{\Delta} \int^{t}_{t-\Delta} q_1(s) ds \right) + \exp\left(- \frac{1}{\Delta} \int^{t}_{t-\Delta} q_2(s) ds \right)} - \mu q_1(t) \\
\updot{q}_2(t) &=& \lambda \cdot \frac{\exp\left(- \frac{1}{\Delta} \int^{t}_{t-\Delta} q_2(s) ds \right)}{\exp\left(- \frac{1}{\Delta} \int^{t}_{t-\Delta} q_1(s) ds \right) + \exp\left(- \frac{1}{\Delta} \int^{t}_{t-\Delta} q_2(s) ds \right)}- \mu q_2(t)
\end{eqnarray}
where we assume that $q_1$ and $q_2$ start at different initial functions $\varphi_1(t)$ and $\varphi_2(t)$ on the interval $[-\Delta,0]$.  

\begin{remark}
We should also mention that if we initialize the differential equations with the same initial conditions, then the moving average delay differential equations are identical for all time because of the symmetry of the problem.  Starting the two queues with identical initial conditions places both queues on an invariant manifold from which it cannot escape. Therefore, we start the system with non-identical initial conditions so the problem is no longer trivial and the two queues start off the invariant manifold.  
\end{remark}

On the onset this problem is seemingly more difficult than the constant delay setting since the ratio now depends on a moving average of the queue length during a delay period $\Delta$.  To simplify the notation, we find it useful to define the moving average of the $i^{th}$ queue over the time interval $[t - \Delta, t]$ as
\begin{eqnarray}\label{mai}
 m_i(t,\Delta) = \frac{1}{\Delta} \int^{t}_{t-\Delta} q_i(s) ds.
\end{eqnarray}

A key observation to make is that the moving average itself solves a delay differential equation.  In fact, by differentiating Equation \ref{mai} with respect to time, it can be shown that the moving average of the $i^{th}$ queue is the solution to the following delay differential equation
\begin{eqnarray} 
\updot{m}_i(t,\Delta) =   \frac{1}{\Delta} \cdot \left(  q_i(t) - q_i(t - \Delta) \right), \quad i \in \{1,2\}.
\end{eqnarray}

Leveraging the above delay equation for the moving average, we can describe our moving average fluid model with the following four dimensional system of delay differential equations

\begin{eqnarray}
\updot{q}_1 &=& \lambda \cdot \frac{\exp\left(- m_1(t) \right) }{\exp\left(- m_1(t) \right) + \exp\left(- m_2(t) \right)} - \mu q_1(t)    \label{ma1}  \\
\updot{q}_2 &=& \lambda \cdot \frac{\exp\left(- m_2(t) \right)}{\exp\left(-m_1(t) \right) + \exp\left(- m_2(t) \right)}- \mu q_2(t) \\
\updot{m}_1 &=&  \frac{1}{\Delta} \cdot \left(  q_1(t) - q_1(t - \Delta) \right) \\
\updot{m}_2 &=&  \frac{1}{\Delta} \cdot \left( q_2(t) - q_2(t - \Delta) \right) \label{ma2}.
\end{eqnarray}
%
%

\begin{figure}
\captionsetup{justification=centering}
		\hspace{-.35in}~\includegraphics[scale=.22]{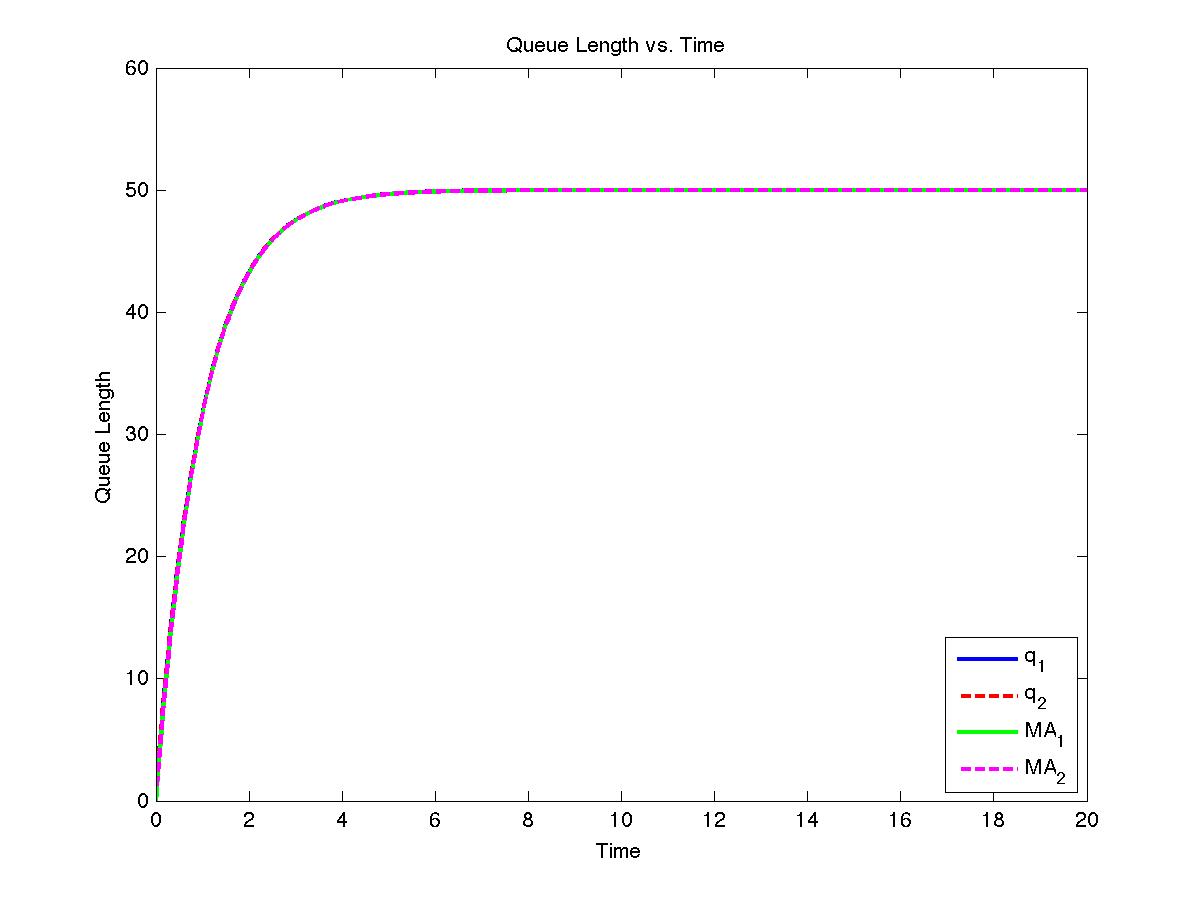}~\hspace{-.3in}~\includegraphics[scale=.22]{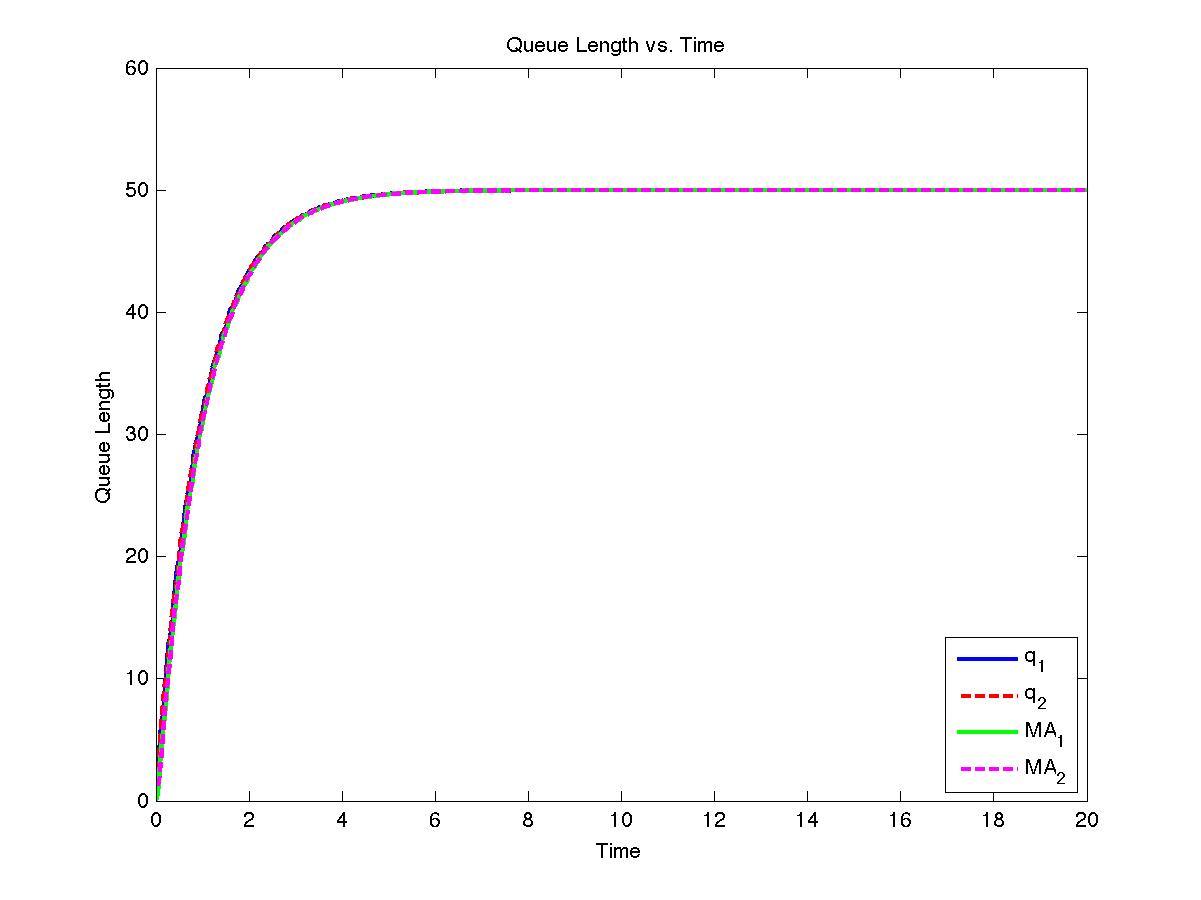} 
	\caption{ $\lambda=100$, $\mu=1$,  \\ $\Delta = .02$  (Left) \quad  $\Delta = .1$ (Right).}\label{Fig6}
\end{figure}

In Figure \ref{Fig6}, we plot two examples of the moving average delay differential equations.  In the example on the left of Figure \ref{Fig6}, the differential equations converge to the equilibrium of $\lambda/(2\mu)$ when $\Delta = .02$ and the dynamics are stable.  It also seems like the plot on the right of Figure \ref{Fig6} also converges to the equilibrium and is also stable.  However, it is not stable and requires even closer observation.  In Figure \ref{Fig7}, we zoom in and look at the dynamics of each example.  On the left of Figure \ref{Fig7}, a closer look reveals that the differential equations are indeed stable.  However, on the right when $\Delta =.1$, it is observed that the differential equations are not stable.  Although the two queues appear to be stable in Figure \ref{Fig6}, the amplitude is too small to detect the asynchronous dynamics of the two queues. Thus, like in the previous fluid model, we need to understand the dynamics of the fluid model near the equilibrium to determine when the two queues will exhibit asynchronous behavior.  The next theorem provides insight for understanding when the equilibrium behavior will be stable or unstable.  

\begin{figure}
\captionsetup{justification=centering}
		\hspace{-.35in}~\includegraphics[scale=.47]{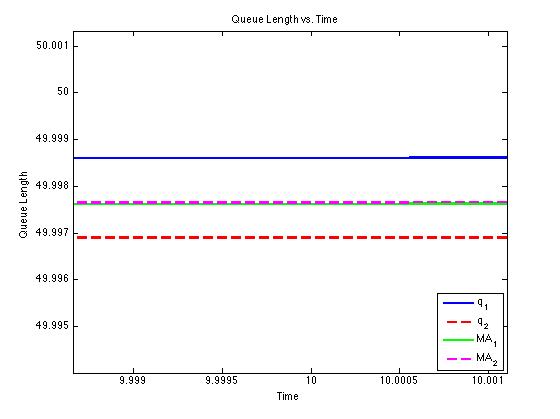}~\hspace{-.3in}~\includegraphics[scale=.47]{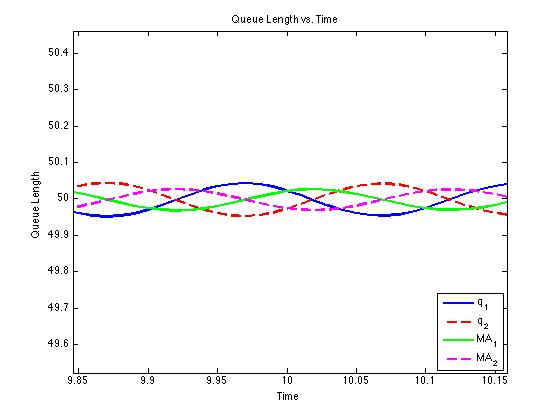} 
	\caption{ Zoomed in.  $\lambda=100$, $\mu=1$,  \\ $\Delta = .02$  (Left) \quad  $\Delta = .1$ (Right).}\label{Fig7}
\end{figure}

In Figure \ref{Fig8}, we plot the Hopf curves for the moving average model when the $\mu =1$.  These curves are different from what would be plotted in the Matlab function \textbf{ez-plot}.  One reason is that we square the cosine and sine functions, which introduces extraneous roots that do not exist.  Thus, the plot given in Figure \ref{Fig8} excludes these extraneous roots.  Moreover, unlike the constant delay case, we also see a linear curve at the bottom of Figure \ref{Fig8}.  This line represents where $\omega =0$ and is another root of the equation.  However, the stability is unchanged on this line, therefore it is not interesting to study.   

\begin{figure}
\captionsetup{justification=centering}
\begin{center}
		\includegraphics[scale=.4]{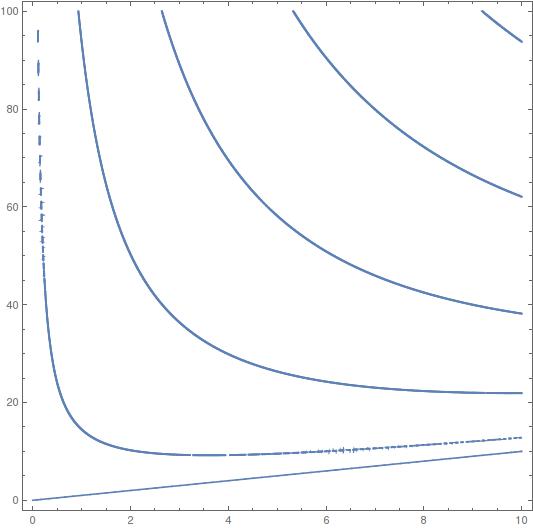}
		\end{center}
	\caption{Mathematica-Plot of Hopf Curves for Moving Average Model ($\mu =1$) \\  }\label{Fig8}
\end{figure}

\begin{theorem}
For the moving average fluid model given by Equations \ref{ma1} - \ref{ma2}, the critical delay parameter is the solution to the following transcendental equation
\begin{equation}
\sin\left( \Delta \cdot \sqrt{\frac{\lambda}{\Delta} - \mu^2} \right) + \frac{2 \mu \Delta}{\lambda} \cdot \sqrt{\frac{\lambda}{\Delta} - \mu^2} = 0 .
\end{equation}

\begin{proof}
Like in the constant delay setting, we will split the proof into several parts to help readers understand the important ingredients that are necessary to prove the theorem. 

\paragraph{Computing the Equilibrium} The first part of the proof is to compute an equilibrium for the solution to the delay differential equations.  In our case, the delay differential equations given in Equations \ref{ma1} - \ref{ma2} are symmetric.  Moreover, in the case where there is no delay, the two equations converge to the same point since in equilibrium each queue will receive exactly one half of the arrivals and the two service rates are identical.  This is also true in the case where the arrival process contains delays in the queue length since in equilibrium, the delayed queue length is equal to the non-delayed queue length.  It can be shown that there is only one equilibrium where all of the states are equal to each other.  One can prove this by substituting $q_2 = \lambda/\mu - q_1$ in the steady state verison of Equation \ref{ma1} and solving for $q_1$.  One eventually sees that $q_1 = q_2$ is the only solution since any other solution does not obey Equation \ref{ma1}.  Thus, we have in equilibrium that 
\begin{eqnarray}
q_1(t) = q_2(t) = m_1(t) = m_2(t) = \frac{\lambda}{2 \mu} \quad \mathrm{ as \ } t \to \infty.
\end{eqnarray}

\paragraph{Understanding the stability of the equilibrium} Now that we know the equilibrium for Equations \ref{ma1} - \ref{ma2}, we need to understand the stability of the delay differential equations around the equilibrium.  The first step in doing this is to set each of the queue lengths to the equilibrium values plus a perturbation.  Thus, we set each of the queue lengths to 
\begin{eqnarray}
q_1(t) &=& \frac{\lambda}{2 \mu} + u_1(t) \label{sub11} \\
q_2(t) &=& \frac{\lambda}{2 \mu} + u_2(t) \\
m_1(t) &=& \frac{\lambda}{2 \mu} + u_3(t) \\
m_2(t) &=& \frac{\lambda}{2 \mu} + u_4(t) \label{sub22}
\end{eqnarray}
Substitute Equations  \ref{sub11} - \ref{sub22} into Equations  \ref{ma1} - \ref{ma2} and linearize about the point  $u_1(t) = u_2(t) = u_3(t) = u_4(t) = 0$, giving
\begin{eqnarray}
\updot{u}_1 &=& \frac{\lambda}{4} \cdot \left( u_4(t) - u_3(t) \right) - \mu \cdot u_1(t)
\\
\updot{u}_2 &=& \frac{\lambda}{4} \cdot \left( u_3(t) - u_4(t) \right) - \mu \cdot u_2(t)
\\
\updot{u}_3 &=& \frac{1}{\Delta} \cdot \left( u_1(t) - u_1(t - \Delta) \right)
\\
\updot{u}_4 &=& \frac{1}{\Delta} \cdot \left( u_2(t) - u_2(t - \Delta) \right)
\end{eqnarray}

\paragraph{Uncoupling the differential equations} In their current form the delay differential equations for the perturbations do not yield any insight sincce they are coupled together.  However, we can make a simple transformation and the resulting delay differential equations will become uncoupled.  Thus, we apply the following transformation to uncouple the system of equations:

\begin{eqnarray}
v_1(t) &=& u_1(t) + u_2(t) \\
v_2(t) &=& u_1(t) - u_2(t) \\
v_3(t) &=& u_3(t) + u_4(t) \\
v_4(t) &=& u_3(t) - u_4(t)
\end{eqnarray}
which gives
\begin{eqnarray}
\updot{v}_1(t) &=& - \mu \cdot v_1(t)  \label{vee1}
\\
\updot{v}_2(t) &=& -\frac{\lambda}{2} \cdot v_4(t)  - \mu \cdot v_2(t)  \label{vee2}
\\
\updot{v}_3(t) &=& \frac{1}{\Delta} \cdot \left( v_1(t) - v_1(t - \Delta) \right)  \label{vee3}
\\
\updot{v}_4(t) &=& \frac{1}{\Delta} \cdot \left( v_2(t) - v_2(t - \Delta) \right)  \label{vee4}
\end{eqnarray}
The general solution of Equation \ref{vee1} is $v_1=c_1 \exp({-\mu t})$ and is stable (bounded).  This also implies that Equation \ref{vee3} is also stable and bounded since it only depends on the solution of Equation \ref{vee1}.
To study Equations \ref{vee2} and \ref{vee4}, we let 
\begin{eqnarray}
v_2 &=& A \exp(rt) \\
v_4 &=& B \exp(rt) .
\end{eqnarray}
These solutions imply the following relationships between the constants A,B, and r.

\begin{eqnarray}
A r  &=& - \frac{\lambda}{2} B - \mu A \\
B r &=& \frac{1}{\Delta} ( A - A \exp( -r \Delta))
\end{eqnarray}

solving for A yields

\begin{eqnarray}
A   &=& - \frac{\lambda}{2(\mu + r )} B 
\end{eqnarray}
and rearranging yields the following equation for $r$
\begin{eqnarray} \label{r-trans2}
r &=& \frac{\lambda}{2 \Delta \cdot r} ( \exp( -r \Delta) - 1) - \mu .
\end{eqnarray}
Now it remains for us to understand the transition between stable and unstable solutions once again.

\paragraph{Understanding the transition between stable and unstable solutions} 
\vspace{.1in}
To find the transition between stable and unstable solutions, set $r=i\omega$, giving us the following equation
\begin{equation}
i\omega = \frac{\lambda}{2 \Delta i \omega}( \exp(-i \omega \Delta) -1)-\mu.
\end{equation}
Multiplying both sides by $i \omega$ and using Euler's identity, we have that 
\begin{equation} \label{maeqniw}
 \frac{\lambda}{2 \Delta }( \cos( \omega \Delta) - i \sin(\omega \Delta) -1)-\mu i \omega + \omega^2 = 0.
\end{equation}
Writing the real and imaginary parts of Equation \ref{maeqniw}, we get:
\begin{equation}
\cos(\omega \Delta) = 1 - \frac{2 \Delta \omega^2}{\lambda}
\end{equation}
for the real part and 
\begin{equation}\label{sineqnma}
\sin(\omega \Delta) = - \frac{2 \Delta \mu \omega}{\lambda}
\end{equation}
Once again by squaring and adding $\sin\omega \Delta$ and $\cos\omega \Delta$  together, we get:
\begin{equation}
\omega = \sqrt{\frac{\lambda}{\Delta} - \mu^2}
\end{equation}

Finally by substituting the expression for $\omega$ into Equation \ref{sineqnma} gives us the final expression for the critical delay, which is the solution to the following transcendental equation:
\begin{equation}\label{crit-2}
 \fbox{ $\displaystyle{  \sin\left( \Delta \cdot \sqrt{\frac{\lambda}{\Delta} - \mu^2} \right) + \frac{2 \mu \Delta}{\lambda} \cdot \sqrt{\frac{\lambda}{\Delta} - \mu^2} = 0 }  $}.
\end{equation}

\end{proof}
\end{theorem}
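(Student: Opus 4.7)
The plan is to mirror the four-step template used for the constant-delay theorem: (i) locate the symmetric equilibrium, (ii) linearize around it, (iii) decouple the linearized system by summing and differencing the two queue/moving-average pairs, and (iv) reduce the resulting linear delay system to a single transcendental characteristic equation, then look for purely imaginary roots $r = i\omega$ to find the Hopf boundary.

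For step (i) I would argue as in the constant-delay case that by symmetry $q_1 = q_2 = \lambda/(2\mu)$ in steady state, and that in steady state the moving averages $m_i$ coincide with the $q_i$, so $q_1 = q_2 = m_1 = m_2 = \lambda/(2\mu)$; uniqueness follows because plugging $q_2 = \lambda/\mu - q_1$ into the steady-state version of the $q_1$ equation forces $q_1 = q_2$. For step (ii) I write $q_i = \lambda/(2\mu) + u_i$, $m_i = \lambda/(2\mu) + u_{i+2}$, and linearize; the key computation is that differentiating the MNL ratio at the symmetric point yields the usual factor $\lambda/4$ multiplying the differences $u_3 - u_4$ and $u_4 - u_3$, while the $m_i$ equations remain linear in the $u_i$'s and are exact (no Taylor expansion needed since they were already linear).

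For step (iii), I set $v_1 = u_1+u_2$, $v_2 = u_1-u_2$, $v_3 = u_3+u_4$, $v_4 = u_3-u_4$. The ``$+$'' combinations satisfy $\dot v_1 = -\mu v_1$ and $\dot v_3 = (v_1(t)-v_1(t-\Delta))/\Delta$, which are automatically stable, so all the action is in the antisymmetric pair $(v_2,v_4)$ governed by
\begin{align}
\dot v_2(t) &= -\tfrac{\lambda}{2} v_4(t) - \mu v_2(t), \\
\dot v_4(t) &= \tfrac{1}{\Delta}\bigl(v_2(t)-v_2(t-\Delta)\bigr).
\end{align}
Substituting $v_2 = Ae^{rt}$, $v_4 = Be^{rt}$ produces two algebraic relations; eliminating $B$ gives the characteristic equation
\begin{equation*}
r = \frac{\lambda}{2\Delta\, r}\bigl(e^{-r\Delta}-1\bigr) - \mu,
\end{equation*}
which is exactly the moving-average analogue of the transcendental equation in the constant-delay proof.

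For step (iv), set $r = i\omega$, multiply through by $i\omega$, apply Euler's identity, and split into real and imaginary parts. The real part gives $\cos(\omega\Delta) = 1 - 2\Delta\omega^2/\lambda$ and the imaginary part gives $\sin(\omega\Delta) = -2\Delta\mu\omega/\lambda$. Using $\sin^2 + \cos^2 = 1$ to eliminate $\omega$ yields $\omega = \sqrt{\lambda/\Delta - \mu^2}$, and substituting back into the sine equation produces the desired transcendental equation for $\Delta_{cr}$. The main obstacle, compared with the constant-delay theorem, is that the characteristic equation now mixes $r$ and $e^{-r\Delta}$ through a $1/(r\Delta)$ factor coming from the moving-average dynamics; this is what forces the final Hopf condition to be implicit (a transcendental equation in $\Delta$ alone rather than the closed-form arccosine expression of the constant-delay case) and it is also the source of the extraneous branch at $\omega = 0$ visible in Figure \ref{Fig8}, which one must be careful to exclude when squaring.
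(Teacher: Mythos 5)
Your proposal follows essentially the same route as the paper's proof: the same symmetric equilibrium $\lambda/(2\mu)$, the same linearization and sum/difference decoupling into $(v_1,v_3)$ and $(v_2,v_4)$, the same characteristic equation $r = \frac{\lambda}{2\Delta r}(e^{-r\Delta}-1)-\mu$, and the same $r=i\omega$ substitution leading to the transcendental condition. The steps are correct, and your closing remark about the $1/(r\Delta)$ factor and the extraneous $\omega=0$ branch matches the paper's own discussion of its Hopf-curve plots.
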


\begin{figure}
\captionsetup{justification=centering}
		\hspace{-.35in}~\includegraphics[scale=.22]{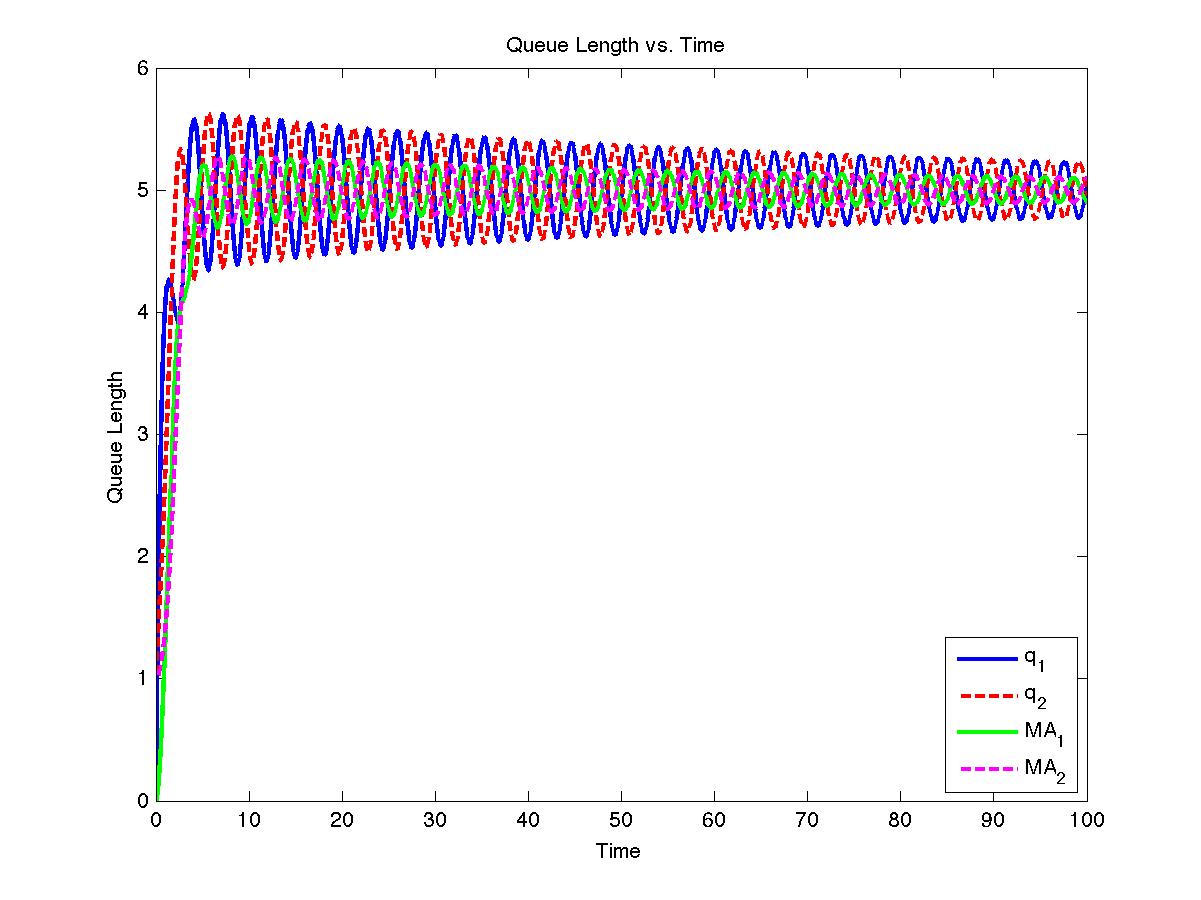}~\hspace{-.3in}~\includegraphics[scale=.22]{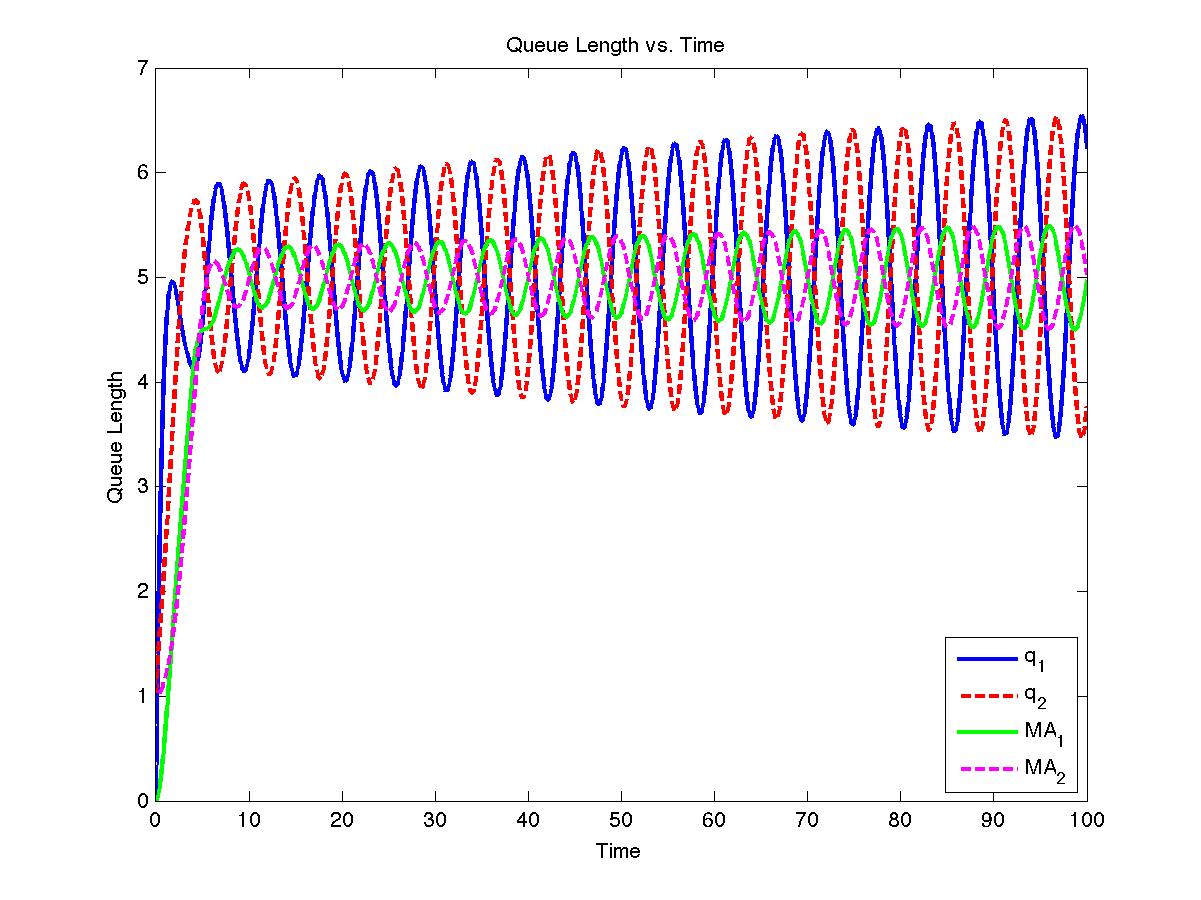} 
	\caption{ $\lambda=10$, $\mu=1$,  \\ $\Delta = 2$  (Left) \quad  $\Delta = 4$ (Right).}\label{Fig9}
\end{figure}

In Figure \ref{Fig9}, we plot the dynamics for the moving average model when the $\Delta =2$ and when $\Delta=4$. When $\Delta=2$ we see that the dynamics are stable and the queues will converge to the equilibrium point.  However, when $\Delta=4$ we see that the dynamics are not stable and the queues are not synchronized.  These dynamics are the same in Figure \ref{Fig10}.  Like in the constant delay example, we see that the stability of the queues is also given by the first Hopf curve. To the left and bottom of the Hopf curve, the two queues will eventually converge to there equilibrium values; however, to the right and above the Hopf curve, the two queues will be forever asynchronous. 

\begin{figure}
\captionsetup{justification=centering}
		\hspace{-.35in}~\includegraphics[scale=.22] {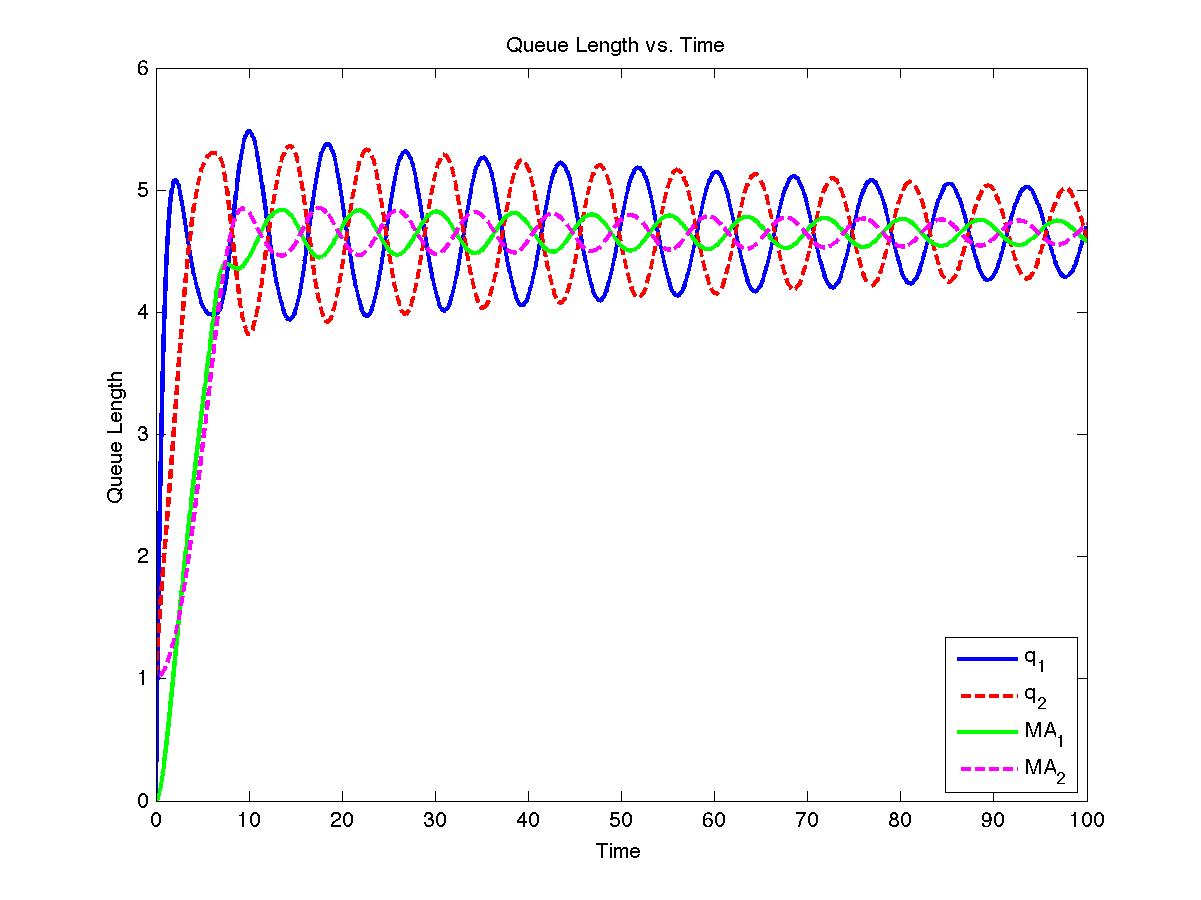}~\hspace{-.3in}~\includegraphics[scale=.22]{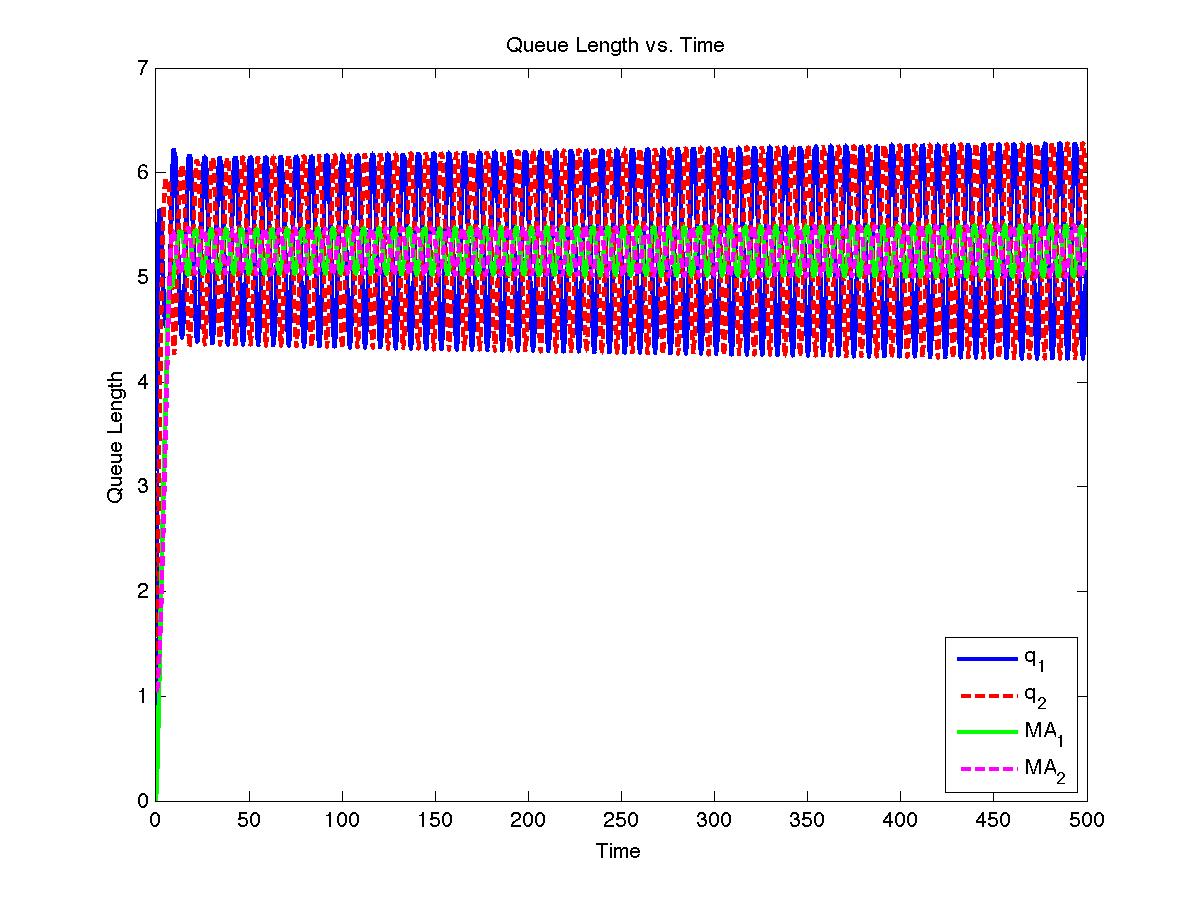} 
	\caption{ $\lambda = 9.3$, $\Delta = 6.5$ ( Left) \quad   \quad \quad  $\lambda = 10.5$, $\Delta = 6.5$ (Right) \\ }\label{Fig10}
\end{figure}

\subsection{Hopf Bifurcation Curves in the Moving Average Model}

Numerical integration with the Matlab delay differential equation package \textbf{dde23} showed that there was only one limit cycle observed in the moving average model, no matter how many of the Hopf curves are crossed. However, like in the constant delay setting, this does not tell us what happens to the various pairs of imaginary roots which occur on the various Hopf curves. We will show that as a perturbation is made increasing the size of the delay, the roots always pass from the left to the right, which implies that the equilibrium remains unstable as the delay increases forever.

Now suppose that the delay $\Delta$ is close to a critical value for a Hopf bifurcation.  We then make a slight perturbation from being near the critical value of the Hopf bifurcation i.e
\begin{equation}
\Delta = \Delta_0 + \epsilon \Delta_1
\end{equation}
where $\epsilon \ll 1$.  When the root $r$ is close to the pure imaginary value, the critical delay is near the value  $\Delta = \Delta_0$.  Thus, when we make a slight perturbation the real and imaginary parts of the root will also be slightly perturbed i.e
\begin{equation}\label{rcd}
r = i \omega + \epsilon ( i r_1 + r_2). 
\end{equation}

\begin{proposition}
For the moving average model, suppose that we make a slight perturbation on the order of $\epsilon \Delta_1$ near a critical delay value for a Hopf bifurcation, then the real part of $r$ is equal to 
\begin{equation}\label{rcd2}
r_2 = \frac{2 \Delta_1 \omega^2 \cdot(2 \Delta_0 \omega^2 - 2 \mu \lambda)}{8 \Delta_0^2 \mu \omega^2 + 12 \Delta_0 \omega^2 + 4 \Delta_0 \lambda \mu + \Delta_0 \lambda^2 + 4 \lambda }.
\end{equation}
\end{proposition}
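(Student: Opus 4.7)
The plan is to follow the same three-step pattern used for the constant-delay proposition, applied now to the transcendental equation \ref{r-trans2}. First, to avoid the $1/r$ singularity in \ref{r-trans2}, I would clear the denominator and write the characteristic equation in the cleaner form
\begin{equation*}
G(r,\Delta) \;:=\; 2\Delta r^2 + 2\Delta\mu r + \lambda - \lambda e^{-r\Delta} \;=\; 0 ,
\end{equation*}
which is entire in $r$ and more convenient to Taylor-expand. Substituting $\Delta = \Delta_0 + \epsilon \Delta_1$ and $r = i\omega + \epsilon(i r_1 + r_2)$ into $G=0$ and expanding in $\epsilon$, the $O(1)$ equation is automatically satisfied, since by construction $(\omega,\Delta_0)$ lies on a Hopf curve; the $O(\epsilon)$ equation is a single complex equation in the two real unknowns $r_1,r_2$, which can be written as $G_r(i\omega,\Delta_0)(ir_1 + r_2) + G_\Delta(i\omega,\Delta_0)\Delta_1 = 0$.

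Next I would compute $G_r$ and $G_\Delta$ at the critical point and eliminate the exponential using $e^{-i\omega\Delta_0} = \cos(\omega\Delta_0) - i\sin(\omega\Delta_0)$ together with the critical-point identities derived in the preceding theorem,
\begin{equation*}
\cos(\omega\Delta_0) \;=\; 1 - \frac{2\Delta_0\omega^2}{\lambda}, \qquad \sin(\omega\Delta_0) \;=\; -\frac{2\Delta_0\mu\omega}{\lambda},
\end{equation*}
so that the partial derivatives become purely polynomial expressions in $\Delta_0, \mu, \lambda, \omega$. Splitting the $O(\epsilon)$ equation into real and imaginary parts gives a $2\times 2$ linear system in $r_1,r_2$; eliminating $r_1$ by Cramer's rule (or direct substitution from the imaginary part into the real part) produces $r_2$ as an explicit ratio of polynomials in $\Delta_0, \mu, \lambda, \omega, \Delta_1$.

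Finally, I would simplify this ratio using the relation $\omega^2 = \lambda/\Delta_0 - \mu^2$ sparingly (only at the end, to avoid re-introducing fractions), which should collapse the numerator to $2\Delta_1 \omega^2(2\Delta_0\omega^2 - 2\mu\lambda)$ and the denominator to $8\Delta_0^2\mu\omega^2 + 12\Delta_0\omega^2 + 4\Delta_0\lambda\mu + \Delta_0\lambda^2 + 4\lambda$ as claimed in \ref{rcd2}.

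The main obstacle is algebraic bookkeeping rather than any conceptual difficulty: compared with the constant-delay case, the variable $r$ now appears both algebraically (through $2\Delta r^2$ and $2\Delta\mu r$) and inside the exponential, so the linearized coefficients $G_r$ and $G_\Delta$ contain a larger collection of cross terms ($\Delta_0\omega^3$, $\Delta_0^2\mu\omega$, etc.) than in the proof of Proposition \ref{prop1}. Deferring the substitution $\omega^2 = \lambda/\Delta_0 - \mu^2$ until the last step, and consistently applying the sine/cosine identities above to eliminate the transcendental terms early, should keep the computation manageable and yield the stated closed form for $r_2$.
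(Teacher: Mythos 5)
Your method is the right one and is exactly what the paper does (substitute $\Delta=\Delta_0+\epsilon\Delta_1$, $r=i\omega+\epsilon(ir_1+r_2)$ into the characteristic equation, expand to $O(\epsilon)$, solve the resulting $2\times 2$ real linear system), and clearing the denominator first is a harmless improvement: writing $G(r,\Delta)=2\Delta r^2+2\Delta\mu r+\lambda-\lambda e^{-r\Delta}$ multiplies Equation \ref{r-trans2} by $2\Delta r$, which is nonzero at $r=i\omega$, so $G_\Delta/G_r=F_\Delta/F_r$ at any root and the crossing speed is unchanged. Your $O(\epsilon)$ equation $G_r(i\omega,\Delta_0)(ir_1+r_2)+G_\Delta(i\omega,\Delta_0)\Delta_1=0$ is also correct.

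The step that fails is the last one: the algebra does not collapse to the stated formula. Carrying out your plan, with $\cos(\omega\Delta_0)=1-2\Delta_0\omega^2/\lambda$ and $\sin(\omega\Delta_0)=-2\Delta_0\mu\omega/\lambda$, one gets
\begin{equation*}
G_r=\Delta_0\left(\lambda+2\mu-2\Delta_0\omega^2\right)+2i\,\Delta_0\omega\left(2+\Delta_0\mu\right),\qquad
G_\Delta=-2\omega^2\left(1+\Delta_0\mu\right)+i\,\omega\left(\lambda+2\mu-2\Delta_0\omega^2\right),
\end{equation*}
and $r_2=-\Delta_1\,\mathrm{Re}\!\left(G_\Delta/G_r\right)$ telescopes to
\begin{equation*}
r_2=\frac{2\Delta_0\omega^2\left(2\Delta_0\omega^2-\lambda-2\mu\right)\Delta_1}{\Delta_0^2\left(\lambda+2\mu-2\Delta_0\omega^2\right)^2+4\Delta_0^2\omega^2\left(2+\Delta_0\mu\right)^2},
\end{equation*}
whose numerator is proportional to $2\Delta_0\omega^2-\lambda-2\mu$, not to $2\Delta_0\omega^2-2\mu\lambda$ as in the proposition. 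The two expressions genuinely disagree: for $\lambda=10$, $\mu=1$ the first Hopf point is at $\Delta_0\approx 2.14$, $\omega\approx 1.91$, where both a direct evaluation of $-\mathrm{Re}(G_\Delta/G_r)$ and a Newton continuation of the root give $r_2\approx +0.048\,\Delta_1$ (the root crosses left to right, consistent with the paper's own Figure 9, which shows stability at $\Delta=2$ and instability at $\Delta=4$), whereas the stated formula evaluates to $\approx -0.055\,\Delta_1$ and its sign criterion ($\Delta_0\omega^2$ versus $\mu\lambda$) predicts a right-to-left crossing there. So the obstacle is not bookkeeping: if you execute your own plan correctly you will arrive at the expression above rather than the one you are asked to prove, and you should report the discrepancy rather than force the simplification.
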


\begin{proof}
In order to prove this, we can follow the same steps as in Proposition \ref{prop1}.  When $\epsilon = 0$, we reduce back to the original critical threshold of Equation \ref{crit-2}.  Now we substitute Equation \ref{rcd2} into Equation \ref{r-trans2} and do a Taylor expansion for small values of $\epsilon$.  Then solve for $r_1$ and $r_2$.  Solving for the real part of $r$, we find that $r_2$ is equal to the following value  
\begin{equation}\label{rcd}
r_2 =  \frac{2 \Delta_1 \omega^2 \cdot(2 \Delta_0 \omega^2 - 2 \mu \lambda)}{8 \Delta_0^2 \mu \omega^2 + 12 \Delta_0 \omega^2 + 4 \Delta_0 \lambda \mu + \Delta_0 \lambda^2 + 4 \lambda }.
\end{equation}

\end{proof}

In the moving average model, we see that the real part of the roots has the same sign as $\Delta_1$ when $\Delta_0 \omega^2 > \mu \lambda$ and has the opposite sign when $\Delta_0 \omega^2 < \mu \lambda$.  Therefore, when $\Delta_0 \omega^2 > \mu \lambda$, the roots move from the left to the right and the stability is preserved.  However, when $\Delta_0 \omega^2 < \mu \lambda$ this may not be the case.  This is caused by the fact that the Hopf curve is not monotone as a function of $\lambda$ and can be seen in Figure \ref{Fig8}.

\section{Conclusion and Future Research} \label{sec_conclusion}

In this paper, we analyze two new two-dimensional fluid models that incorporate customer choice and delayed queue length information.  The first model considers the customer choice as a multinomial logit model where the queue length information given to the customer is delayed by a constant $\Delta$.  We derive an explicit threshold for the critical delay where below the threshold the two queues are balanced and converge to the equilibrium.  However, when $\Delta$ is larger than the threshold, the two queues have asynchronous dynamics and the equilibrium point is unstable.  In the second model, we consider customer choice as a multinomial logit model where the queue length information given to the customer is a moving average over an interval of $\Delta$.  We also derive an explicit threshold where below the threshold the queues are balanced and above the threshold the queues are asynchronous.  It is important for businesses and managers to determine and know these thresholds since using delayed information can have such a large impact on the dynamics of the business.  Even small delays can cause oscillations and it is of great importance for managers of these service systems to understand when oscillations can arise based on the arrival and service parameters.   

Since our analysis is the first of its kind in the queueing and operations management literature, there are many extensions that are worthy of future study.  One extension that we would like to explore is the impact of nonstationary arrival rates.  This is important not only because arrival rates of customers are not constant over time, but also because it is important to know how to distinguish and separate the impact of the time varying arrival rate from the impact of the delayed information given to the customer.  Other extensions include the use of different customer choice functions, extensions to multiserver queues, and incorporating customer preferences in the model.  A detailed analysis of these extensions will provide a better understand of what information and how the information that operations managers provide to their customers will affect the dynamics of the system.  We plan to explore these extensions in subsequent work.

%


\bibliographystyle{plainnat}
\bibliography{choice}
\end{document}